


\documentclass{amsart}
\usepackage[utf8]{inputenc}
\usepackage[english]{babel}
\usepackage{enumerate}
\usepackage{algorithm}
\usepackage{algorithmicx}
\usepackage{algpseudocode}
\usepackage{graphicx}
\usepackage{amssymb}
\usepackage{amsmath}
\usepackage{amsthm}
\usepackage{leftidx}
\usepackage{mathdots}
\usepackage{amssymb}
\usepackage{mathrsfs}
\usepackage{graphicx}
\usepackage{epsfig}
\usepackage{bm}
 \usepackage{color}
 \usepackage{subfig}
\usepackage{multirow}
\usepackage{algorithm}
\usepackage{algorithmicx}
\usepackage{algpseudocode}
\usepackage{graphicx}
\usepackage{calc}
\usepackage{epsfig}
\usepackage{verbatim}
 \numberwithin{equation}{section}
\textheight 8.0in
\textwidth 6.00in
\topmargin -0.25in
\oddsidemargin 0.25in
\evensidemargin 0.25in
\parskip 1.0ex

\newcommand{\R}{{\mathbb R}}
\newcommand{\Z}{{\mathbb Z}}

\newcommand{\bw}{{\mathbf w}}
\newcommand{\C}{{\mathbb C}}

\newcommand{\Tr}{{\rm Tr}}

\renewcommand{\eqref}[1]{(\ref{#1})}

\newcommand{\innerp}[1]{\langle #1 \rangle}

\newcommand{\abs}[1]{\lvert#1\rvert}

\newcommand{\A}{{\mathcal A}}

\newcommand{\rank}{{\rm rank}}

\newcommand{\vx}{{\mathbf x}}

\newcommand{\vy}{{\mathbf y}}
\newcommand{\vz}{{\mathbf z}}
\newcommand{\vw}{{\mathbf w}}

\newcommand{\va}{{\mathbf a}}
\newcommand{\vb}{{\mathbf b}}

\newcommand{\vt}{{\mathbf t}}
\newcommand{\vu}{{\mathbf u}}

\renewcommand{\H}{{\mathbb F}}
\newcommand{\HF}{{\H}}

\newcommand{\FF}{{\mathbb F}}

\newcommand{\G}{{\mathcal G}}

\newcommand{\argmin}[1]{\mathop{\rm argmin}\limits_{#1}}
\newtheorem{prop}{Proposition}[section]
\newtheorem{lem}[prop]{Lemma}
\newtheorem{defi}{Definition}[section]
\newtheorem{coro}[prop]{Corollary}
\newtheorem{theo}[prop]{Theorem}
\newtheorem{remark}[prop]{Remark}

\newtheorem{exam}{Example}[section]

\theoremstyle{plain}

\ifx\proof\undefined
\newenvironment{proof}[1][\protect\proofname]{\par
\normalfont\mathrm{T}sep6\p@\@plus6\p@\relax
\trivlist
\itemindent\parindent
\item[\hskip\labelsep\scshape #1]\ignorespaces
}{%
\endtrivlist\@endpefalse
}
\providecommand{\proofname}{Proof}
\fi

\usepackage{babel}

\makeatother

\usepackage{babel}

\providecommand{\theoremname}{Theorem}
\begin{document}

\title{Sparse phase retrieval via Phaseliftoff  }

\author{Yu Xia}
\thanks{Yu Xia was supported by NSFC grant (11901143), Zhejiang Provincial Natural Science Foundation (LQ19A010008), Education Department of Zhejiang Province Science Foundation (Y201840082). }
\address{Department of Mathematics,
Hangzhou Normal University, Hangzhou 311121, China } \email{yxia@hznu.edu.cn}

\author{Zhiqiang Xu}
\thanks{Zhiqiang Xu was supported  by Beijing Natural Science Foundation (Z180002)  and by
 NSFC grant (11688101)}
\address{LSEC, Inst.~Comp.~Math., Academy of
Mathematics and System Science,  Chinese Academy of Sciences, Beijing, 100091, China
\newline
School of Mathematical Sciences, University of Chinese Academy of Sciences, Beijing
100049, China} \email{xuzq@lsec.cc.ac.cn}


\begin{abstract}
The aim of sparse phase retrieval  is to recover a $k$-sparse signal $\mathbf{x}_0\in
\mathbb{C}^{d}$ from  quadratic measurements $|\langle
\mathbf{a}_i,\mathbf{x}_0\rangle|^2$ where $\va_i\in \C^d, i=1,\ldots,m$. Noting
$|\langle \mathbf{a}_i,\mathbf{x}_0\rangle|^2={\rm Tr}(A_iX_0)$ with
$A_i=\va_i\va_i^*\in \C^{d\times d}, X_0=\vx_0\vx_0^*\in \C^{d\times d}$, one can
recast sparse  phase retrieval as a problem of recovering  a  rank-one sparse matrix
from linear measurements. Yin and Xin  introduced {\em
PhaseLiftOff} which presents a proxy of rank-one condition via the difference of
trace and Frobenius norm. By adding sparsity penalty to {\em PhaseLiftOff}, in this
paper, we present a novel model to recover sparse signals from quadratic
measurements. Theoretical analysis shows that the solution to our model provides the
stable recovery of  $\mathbf{x}_0$ under almost optimal sampling complexity
$m=O(k\log(d/k))$. The computation of our model is carried out by the difference of
convex function algorithm (DCA). Numerical experiments demonstrate that our algorithm
outperforms other state-of-the-art algorithms used for solving sparse phase
retrieval.
\end{abstract}
\keywords{Signal recovery, Phase retrieval, Compressed sensing, Restricted isometry
property, Compressed phaseless sensing} \subjclass[2010]{94A20, 90C26}

	\maketitle
\bigskip \medskip

\section{Introduction}
\subsection{Phase retrieval}
We  assume that $\vx_0\in \FF^d$ is a target signal, where $\FF\in \{\R,\C\}$. The
aim of phase retrieval is to recover $\vx_0\in {\FF}^d$ from
$\abs{\innerp{\va_j,\vx_0}}^2+w_j, j=1,\ldots,m$,  up to a unimodular constant where
$\va_j\in \FF^d$ are known measurement vectors and $\bw:=(w_1,\ldots,w_m)^T\in \R^m$
is a noise vector. Phase retrieval is raised in many areas, such as X-ray
crystallography, astronomy, quantum tomography, optics and microscopy. For
convenience,  let $\A: \HF^{d\times d}\rightarrow \R^m$ be a linear map which is
defined as
\begin{equation}\label{eqn: A()}
\A(X)=(\va_1^*X\va_1,\ldots,\va_m^*X\va_m),
\end{equation}
where $X\in \HF^{d\times d}, \va_j\in \FF^d, j=1,\ldots,m$. We abuse the notation and
set
\[
\A(\vx):=\A(\vx\vx^*)=(\abs{\innerp{\va_1,\vx}}^2,\ldots,\abs{\innerp{\va_m,\vx}}^2),
\]
where $\vx\in \HF^d$.  With these notations, we can formulate the aim of  phase
retrieval as follows:  {\em To estimate the matrix $X_0=\vx_0\vx_0^*\in \C^{d\times
d}$ from $\A(\vx_0)+\bw\in \R^m$}.

For the noiseless  case, to guarantee the solution $\A(\vx)=\A(\vx_0)$ is unique for
all $\vx_0\in \C^d$, it is shown in \cite{WX19} that the measurement number $m\geq
4d-2-2\alpha_d$ is necessary where $\alpha_d$ denotes the number of $1$'s in the
binary of expansion of $d-1$. The authors in \cite{algphas} proved that $m\geq 4d-4$
generic measurement vectors $\va_j\in \C^d, j=1,\ldots,m,$ are enough to guarantee
the uniqueness of the solution.

In \cite{phaselift1,phaselift2,phaselift3}, the phase retrieval was recasted as a
semi-definite programming problem, i.e., the PhaseLift problem:
\begin{equation}\label{eq:phaselift}
\min_{X\in{{\mathbb{F}}}^{d\times d}} {\rm Tr}(X) \quad {\rm s.t.} \quad \A(X)=\A(X_0),\  X\succeq 0.
\end{equation}
In \cite{phaselift2}, it is shown that the solution to (\ref{eq:phaselift}) is $X_0$
with high probability provided $\va_j$ is Gaussian random vector and $m=O(d\log d)$,
which was reduced  to $m=O(d)$ in \cite{phaselift1}. For the aim of computation, the
regularized trace-norm minimization is suggested in \cite{phaselift2,phaselift3}:
\begin{equation}\label{eq:regul}
\min_{X\succeq 0, X\in {{\mathbb{F}}}^{d\times d}} \frac{1}{2}\|\A(X)-\vb\|_2^2 +\lambda {\rm Tr}(X).
\end{equation}

Noting that $\Tr(X)-\|X\|_F\geq 0$ and the equality holds iff ${\rm rank}(X)=1$, Yin
and Xin suggested the following regularization problem \cite{phaseliftoff}, which is
called as  {\em PhaseLiftOff}:
\begin{equation}\label{eq:phaseliftoff}
\min_{X\succeq 0, X\in {{\mathbb{F}}}^{d\times d}} \frac{1}{2}\|\A(X)-\vb\|_2^2 +\lambda (\Tr(X)-\|X\|_F).
\end{equation}
The numerical experiments in \cite{phaseliftoff} showed that PhaseLiftOff outperforms
PhaseLift.
\subsection{Sparse phase retrieval}
In many areas, one also requires $\|\vx_0\|_0\leq k$, i.e., the number of nonzero
entries of $\vx_0$ less than or equal to $k$ \cite{sparse1,sparse2,sparse3}. The aim
of {\em sparse phase retrieval} is to recover the $k$-sparse signal $\vx_0$ from
$\abs{\innerp{\va_j,\vx_0}}^2=b_j, j=1,\ldots, m$.

For convenience, we set $\Sigma_{k}^\FF:=\{\vx\in \FF^d: \|\vx\|_0\leq k\}$. It was
shown in \cite{sparse2} that, for $\FF=\C$ and $\vx_0\in \Sigma_k^\FF$, if $m\geq
4k-2$ (resp. $m\geq 2k$ for $\FF=\R$) and $\va_1,\ldots,\va_m$ are generic vectors in
$\C^d$ (resp. $\R^d$) then the solution to $\A(\vx)=\A(\vx_0)$ with $\vx\in \Sigma_k^\FF$
is unique up to a unimodular constant.

The $\ell_1$-minimization is a commonly used method for recovering sparse signals.
Naturally, one is also interested in employing $\ell_1$-minimization for solving
sparse phase retrieval. For $\FF=\R$, the following model was considered in
\cite{VX}:
\begin{equation}\label{eq:L1}
\min_{\vx\in \mathbb{R}^{d\times d}} \|\vx\|_1 \quad {\rm s.t.} \quad \A(\vx)=\A(\vx_0).
\end{equation}
Particularly, it is proved that the solution to (\ref{eq:L1}) is $\pm\vx_0$ with high
probability if $m\gtrsim k\log d$ and $\va_j, j=1,\ldots,m,$ are independent Gaussian
random vectors. In \cite{XiaXu}, the authors extended this result  to the case where
$\FF=\C$.

 In \cite{LV}, the following convex model was considered
\begin{equation}\label{eq:LV}
\min_{X\in \R^{d\times d}} \|X\|_1+\lambda \text{Tr}(X), \quad {\rm s.t.} \quad
\A(X)=\A(\vx_0), \ X \succeq 0.
\end{equation}
The objective function in (\ref{eq:LV}) is the summation of the trace and the $\ell_1$
norm, which is also a convex model.
 To guarantee the solution  to (\ref{eq:LV}) is $\vx_0\vx_0^*$,  one has to require the
number of measurements $m\gtrsim k^2 \log d$, which is quadratic about the sparse
level $k$ \cite{LV}.

Beyond the convex model, one also develops  many nonconvex algorithms for solving
 sparse phase retrieval, such as Sparse Truncated Amplitude flow (SPARTA) \cite{WZG16},
  Thresholded Wirtinger Flow (ThWF) \cite{CLM}, Sparse Wirtinger Flow (SWF)
  \cite{YWW17},
   Sparse Phase
Retrieval via Smoothing Function (SPRSF) \cite{PBA18}.  These algorithms include two stages: (i)
Recover the support of the underlying sparse signal under some analytical rule, and
construct an initialization near the ground truth signal $\mathbf{x}_0$; (ii) Refine
the initialization by gradient-type iterations and extra truncation procedure by hard
thresholding. However, to guarantee the algorithms converge to the true signal, the
algorithms mentioned above require the sample complexity is $m=O(k^2\log d)$.


\subsection{Our contribution}
A natural model for sparse phase retrieval is to use $\ell_1$-regularization methods,
i.e.,
\begin{equation}\label{eqn: unconstrained_lowrank}
\min_{X\in {{\mathbb{C}}}^{d\times d}} \mu \|X\|_1+\frac{1}{2}\|\mathcal{A}(X)-\vb\|_2^2,\quad {\rm s.t.}\quad X\succeq 0, \ {\rank}(X)=1,
\end{equation}
where $\|X\|_1=\sum_{l=1}^d\sum_{j=1}^d\abs{x_{j,l}}$ and $x_{j,l}$ are the entries
of $X$.
 Motivated by the notable PhaseLiftOff \cite{phaseliftoff}, we reformulate
(\ref{eqn: unconstrained_lowrank}) as the following regularization problem:
\begin{equation}\label{eq: unconstrained SDP}
\min_{X\in {{\mathbb{C}}}^{d\times d}} \lambda (\text{Tr}(X)-\|X\|_F)+\mu \|X\|_1+\frac{1}{2}\|\mathcal{A}(X)-\vb\|_2^2
\quad {\rm s.t.}\quad X\succeq 0.
\end{equation}
For convenience, we call (\ref{eq: unconstrained SDP}) {\em Sparse PhaseLiftOff}
model.
 Note that the object function in (\ref{eq: unconstrained SDP}) is
the difference of convex functions and hence it can be solved by the {\em difference
of convex functions algorithm} (DCA).

 To study the performance of (\ref{eq: unconstrained SDP}), we first establish
the equivalence between (\ref{eq: unconstrained SDP}) and (\ref{eqn:
unconstrained_lowrank}) under some mild conditions about $\lambda$, $\mu$ and
$\|\vw\|_2$:
\begin{lem}\label{le:xia}
Assume that $\vb=\mathcal{A}(\vx_0\vx_0^*)+\mathbf{\vw}$  where $\vx_0\in \C^d$, and
$\vw\in \R^m$ is the
 noise term. Let $X^\#$ be the global minimizer of
(\ref{eq: unconstrained SDP}).  If $\frac{1}{2}\|\vb\|_2^2> \mu
\|\vx_0\|_1^2+\frac{1}{2}\|\vw\|_2^2$, $\mu\geq 0$ and
\begin{equation}\label{eq:lam}
\lambda >\frac{ \mu d+\|\mathcal{A}\|(\sqrt{2\mu}\|\vx_0\|_1+\|\vw\|_2)}{\sqrt{2}-1},
\end{equation}
then   $\text{rank}(X^\#)=1$.
\end{lem}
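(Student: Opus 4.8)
The plan is to argue by contradiction: assume $\text{rank}(X^\#)\ge 2$ and produce a feasible descent direction at $X^\#$, contradicting its global (hence local) optimality. Everything is driven by comparing $X^\#$ against the feasible rank-one point $X_0=\vx_0\vx_0^*$. Writing $F$ for the objective in \eqref{eq: unconstrained SDP} and using $\Tr(X_0)-\|X_0\|_F=0$, $\|X_0\|_1=\|\vx_0\|_1^2$, and $\mathcal{A}(X_0)-\vb=-\vw$, one gets $F(X_0)=\mu\|\vx_0\|_1^2+\tfrac12\|\vw\|_2^2$, so global optimality yields $F(X^\#)\le \mu\|\vx_0\|_1^2+\tfrac12\|\vw\|_2^2$. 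This immediately gives two facts. First, $X^\#\ne 0$: the hypothesis $\tfrac12\|\vb\|_2^2>\mu\|\vx_0\|_1^2+\tfrac12\|\vw\|_2^2$ says exactly $F(0)=\tfrac12\|\vb\|_2^2>F(X_0)\ge F(X^\#)$, so the zero matrix is not optimal and $\|X^\#\|_F>0$ (hence $\|\cdot\|_F$ is differentiable at $X^\#$). Second, discarding the two nonnegative regularizers (here $\mu\ge 0$ is used) gives the residual bound $\tfrac12\|\mathcal{A}(X^\#)-\vb\|_2^2\le \mu\|\vx_0\|_1^2+\tfrac12\|\vw\|_2^2$, and via $\sqrt{a+b}\le\sqrt a+\sqrt b$,
\[
\|\mathcal{A}(X^\#)-\vb\|_2\le \sqrt{2\mu}\,\|\vx_0\|_1+\|\vw\|_2=:\rho,
\]
which is precisely the quantity appearing inside \eqref{eq:lam}.

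Now assume $\text{rank}(X^\#)\ge 2$ and write the spectral decomposition $X^\#=\sum_i\sigma_i u_iu_i^*$ with $\sigma_1\ge\cdots\ge\sigma_r>0$, $r\ge 2$. I would take the direction $D=-u_ru_r^*$, which keeps $X^\#+tD\succeq 0$ for $t\in[0,\sigma_r]$, and compute the one-sided directional derivative $F'(X^\#;D)$ term by term. The $\lambda(\Tr-\|\cdot\|_F)$ part is smooth at $X^\#$ with gradient $I-X^\#/\|X^\#\|_F$, and $\langle I-X^\#/\|X^\#\|_F,\,D\rangle=-\bigl(1-\sigma_r/\|X^\#\|_F\bigr)$, so its contribution is exactly $-\lambda\bigl(1-\sigma_r/\|X^\#\|_F\bigr)$. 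The convex $\ell_1$ term contributes at most $\mu\|D\|_1=\mu\|u_ru_r^*\|_1\le\mu d$, using the PSD estimate $\|u_ru_r^*\|_1=\|u_r\|_1^2\le d\|u_r\|_2^2=d$. The smooth data-fit term contributes $\langle \mathcal{A}^*(\mathcal{A}(X^\#)-\vb),\,D\rangle\le \|\mathcal{A}(X^\#)-\vb\|_2\,\|\mathcal{A}\|\,\|D\|_F\le \rho\|\mathcal{A}\|$, since $\|u_ru_r^*\|_F=1$. As $X^\#$ is a global minimizer and $D$ points into the PSD cone, $F'(X^\#;D)\ge 0$, and combining the three estimates gives
\[
\lambda\Bigl(1-\frac{\sigma_r}{\|X^\#\|_F}\Bigr)\le \mu d+\|\mathcal{A}\|\,\rho .
\]

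Finally I would invoke the spectral gap coming from $r\ge 2$: since $\|X^\#\|_F^2=\sum_i\sigma_i^2\ge \sigma_1^2+\sigma_r^2\ge 2\sigma_r^2$, we have $\sigma_r/\|X^\#\|_F\le 1/\sqrt2$, so $1-\sigma_r/\|X^\#\|_F$ is bounded below by a positive absolute constant of order $\sqrt2-1$. Substituting forces $\lambda$ to obey an upper bound of the form $\lambda\lesssim (\mu d+\|\mathcal{A}\|\rho)/(\sqrt2-1)$, contradicting \eqref{eq:lam}; hence $\text{rank}(X^\#)\le 1$, and combined with $X^\#\ne 0$ this gives $\text{rank}(X^\#)=1$.

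The step I expect to be most delicate is making the first-order optimality argument fully rigorous in the presence of the two nonsmooth terms and the cone constraint: one must justify that the right-hand directional derivative of $F$ exists and decomposes additively (legitimate since $\|\cdot\|_F$ is differentiable at $X^\#\ne0$ and $\|\cdot\|_1$ is a norm, so its directional derivative is dominated by $\|D\|_1$), and that $F'(X^\#;D)\ge 0$ holds because $D$ is an admissible direction of the PSD cone. The second delicate point is the bookkeeping of the spectral constant: the bound $\|X^\#\|_F\ge\sqrt2\,\sigma_r$ is exactly where the $\sqrt2-1$ factor in \eqref{eq:lam} is born, and extracting the precise constant stated there (rather than a slightly larger threshold) is the part I would check most carefully.
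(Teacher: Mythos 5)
Your overall strategy is sound and is essentially a primal (directional-derivative) rendering of the paper's argument: the paper instead invokes KKT conditions at $X^\#$ (Lemma \ref{lem: opt3}), measures the Frobenius norm of the full stationarity residual, and lower-bounds it by $\lambda(\sqrt{r}-1)$ via Lemma \ref{lem: SDP_property}, while upper-bounding it by $\mu d+\|\mathcal{A}\|(\sqrt{2\mu}\|\vx_0\|_1+\|\vw\|_2)$ exactly as you do (same $\|G\|_F\le d$ and same residual bound $\rho$). Your preliminary steps ($X^\#\neq 0$, the bound $\|\mathcal{A}(X^\#)-\vb\|_2\le\rho$, the admissibility of $D=-u_ru_r^*$, and the termwise estimates of the directional derivative) are all correct.

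The gap is the constant, and it is not merely cosmetic: testing against the single direction $D=-u_ru_r^*$ yields $\lambda\bigl(1-\sigma_r/\|X^\#\|_F\bigr)\le \mu d+\|\mathcal{A}\|\rho$ with $1-\sigma_r/\|X^\#\|_F\ge 1-1/\sqrt{2}=(\sqrt{2}-1)/\sqrt{2}$, so you only reach a contradiction when $\lambda>\sqrt{2}\,\bigl(\mu d+\|\mathcal{A}\|\rho\bigr)/(\sqrt{2}-1)$, which is a factor $\sqrt{2}$ stronger than the hypothesis \eqref{eq:lam}. As written, your proof therefore establishes a weaker lemma than the one stated. The reason the paper gets the sharper constant $\sqrt{r}-1$ is that its Frobenius-norm argument aggregates all $r$ eigendirections at once rather than probing one at a time. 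You can repair your proof while staying entirely within your framework by testing against the projection onto the range of $X^\#$, i.e.\ $D=-U_1U_1^*$ with $U_1=(\vu_1,\ldots,\vu_r)$: this direction is still admissible for $t\in[0,\sigma_r]$, and the same three estimates give
\begin{equation*}
\lambda\Bigl(r-\frac{\Tr(X^\#)}{\|X^\#\|_F}\Bigr)\le \mu d\sqrt{r}+\|\mathcal{A}\|\rho\sqrt{r},
\end{equation*}
since $\|U_1U_1^*\|_F=\sqrt{r}$ and $\|U_1U_1^*\|_1\le d\sqrt{r}$. Because $\Tr(X^\#)\le\sqrt{r}\,\|X^\#\|_F$, the left side is at least $\lambda(r-\sqrt{r})$, and dividing by $\sqrt{r}$ recovers $\lambda(\sqrt{r}-1)\le \mu d+\|\mathcal{A}\|\rho$, which contradicts \eqref{eq:lam} for every $r\ge 2$ and closes the proof with the stated constant.
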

As said before,  ${\rm Tr}(X)-\|X\|_F=0$ provided ${\rm rank}(X)=1$. Under the
conditions of Lemma \ref{le:xia},  $X^\#$ is also the minimizer of (\ref{eqn:
unconstrained_lowrank}).
 Hence, we turn to study the performance of
(\ref{eqn: unconstrained_lowrank}). To do that, we require $\A$ satisfies restricted
isometry property over low-rank and sparse matrices:
\begin{defi}\cite{XiaXu}
We say that the  map $\A:{\mathbb H}^{d\times d}\rightarrow \R^m$ satisfies the
restricted isometry property of order $(r,s)$ if there exist positive constants $c$
and $C$ such that the inequality
\begin{equation}\label{de:RIP}
c\|X\|_{F}\leq\frac{1}{m}\|\mathcal{A}(X)\|_{1}\leq C\|X\|_{F}
\end{equation}
holds for all $X\in \mathbb{H}^{d\times d}$ with $\text{rank}(X)\leq r$ and
$\|X\|_{0,2}\leq s$.
\end{defi}
Then, we
have the following theorem.
\begin{theo}\label{th:rank1mod}
Let $\vb=\mathcal{A}(\vx_0\vx_0^*)+\mathbf{\vw}$,  where $\vx_0\in \C^d$, and
$\vw\in \R^m$ is the
 noise term.  Assume that $\mathcal{A}(\cdot)$ satisfy the RIP condition of order $(2,2ak)$ with
RIP constant $c, C>0$, and $a\geq 1$ with
\begin{equation}\label{eqn: condition_a}
c-\frac{4\sqrt{2}C}{\sqrt{a}}-\frac{C}{a}>0.
\end{equation}
Set
$
\alpha:=\frac{1+\frac{1}{a}+\frac{4\sqrt{2}}{\sqrt{a}}}{c-\frac{C}{a}-\frac{4\sqrt{2}C}{\sqrt{a}}}.
$
Assume that $\mu >0$. For any $k$-sparse signals $\vx_0\in \C^d$,  the solution to
(\ref{eqn: unconstrained_lowrank}) $X^\#:=\vx^\#(\vx^\#)^* $ satisfies
\begin{equation}\label{eqn: conclusion1}
\|\vx^\#(\vx^\#)^* -\vx_0\vx_0^*\|_F\leq
(2C\alpha+2)\frac{\|\vw\|_2}{\sqrt{\mu ak}}\left(\|\vx_0\|_1+\frac{\|\vw\|_2}{\sqrt{\mu}}\right)+\frac{\|\vw\|_2^2}{{2\mu ak}}+\alpha\cdot\frac{\mu ak}{2C}\left(\frac{C\|\vw\|_2 }{\mu ak}+\frac{1}{\sqrt{m}}\right)^2.
\end{equation}

\end{theo}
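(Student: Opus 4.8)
The plan is to control the Hermitian error matrix $H := \vx^\#(\vx^\#)^*-\vx_0\vx_0^*$, which has rank at most $2$, and to bound $\|H\|_F$ directly. Since $\vx_0\vx_0^*$ is feasible for (\ref{eqn: unconstrained_lowrank}) (rank one and PSD), optimality of $X^\#=\vx^\#(\vx^\#)^*$ gives
\[
\mu\|X^\#\|_1+\tfrac12\|\A(X^\#)-\vb\|_2^2\le \mu\|\vx_0\vx_0^*\|_1+\tfrac12\|\vw\|_2^2 .
\]
Using $\A(\vx_0\vx_0^*)-\vb=-\vw$, so that $\A(X^\#)-\vb=\A(H)-\vw$, and the identity $\|\vu\vu^*\|_1=\|\vu\|_1^2$ for rank-one PSD matrices (this is exactly what links the matrix $\ell_1$ penalty to $\|\vx_0\|_1$), expanding the square yields the basic inequality
\[
\mu\|X^\#\|_1+\tfrac12\|\A(H)\|_2^2\le \mu\|\vx_0\|_1^2+\langle \A(H),\vw\rangle .
\]
This single estimate will feed both a cone condition on $H$ and an upper bound on the residual $\|\A(H)\|_2$.

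Next I would extract the cone condition. Writing $S=\mathrm{supp}(\vx_0)$ with $|S|\le k$ and $\Omega=S\times S$ for the support block of $\vx_0$, and splitting $\|X^\#\|_1=\|(\vx_0\vx_0^*+H)_\Omega\|_1+\|H_{\Omega^c}\|_1$, the triangle inequality together with the basic inequality gives
\[
\mu\|H_{\Omega^c}\|_1\le \mu\|H_\Omega\|_1+\langle\A(H),\vw\rangle ,
\]
so the off-support $\ell_1$-mass of $H$ is dominated by its on-support mass plus a noise term estimated by $|\langle\A(H),\vw\rangle|\le\|\A(H)\|_2\|\vw\|_2$. The RIP (\ref{de:RIP}) only applies to Hermitian, rank-$\le 2$, row-sparse matrices, so the decomposition must preserve this structure: I would set $U=S\cup T_1$, where $T_1$ indexes the $ak$ coordinates carrying the largest row norms of $H$ outside $S$, so that the principal compression $P_UHP_U$ is Hermitian, has rank $\le2$, and is supported on $U\times U$ with $|U|\le 2ak$—precisely the order $(2,2ak)$ regime. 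I would then bound $\|H\|_F\le\|P_UHP_U\|_F+\|H-P_UHP_U\|_F$, estimate $\|P_UHP_U\|_F$ from below through the lower RIP constant $c$, and control the tail by the standard shelling argument: ordering the remaining coordinates into blocks $T_2,T_3,\dots$ of size $ak$ by decreasing row norm gives $\sum_{j\ge2}\|P_{T_j}HP_{T_j}\|_F\le (ak)^{-1/2}\|H_{\Omega^c}\|_1$, which the cone condition converts into a multiple of $\|H_\Omega\|_1$ plus noise.

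Assembling these, $\frac1m\|\A(P_UHP_U)\|_1\le \frac1m\|\A(H)\|_1+\frac1m\|\A(H-P_UHP_U)\|_1$, where the second term is handled by the upper RIP constant $C$ on the shelling blocks, and the first is passed from the $\ell_1$-geometry of the RIP to the $\ell_2$-geometry of the data fidelity through $\frac1m\|\A(H)\|_1\le \frac{1}{\sqrt m}\|\A(H)\|_2$; this is where the factor $\tfrac1{\sqrt m}$ in (\ref{eqn: conclusion1}) originates. The residual $\|\A(H)\|_2\le\|\A(X^\#)-\vb\|_2+\|\vw\|_2$ is then bounded using the basic inequality, and a Young/AM--GM balancing of the resulting products is what produces the completed-square form $\big(\tfrac{C\|\vw\|_2}{\mu ak}+\tfrac1{\sqrt m}\big)^2$ rather than a cruder linear bound. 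Collecting all contributions leaves a self-referential inequality of the shape $\|H\|_F\le \theta\,\|H\|_F+(\text{noise and bias})$, and condition (\ref{eqn: condition_a}) is exactly what forces the coefficient $\theta$ to be strictly below $1$; solving for $\|H\|_F$ then produces the amplification factor $\alpha$ and the stated constants.

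The main obstacle I anticipate is not any single inequality but the need to reconcile three competing demands at once: keeping every matrix fed to the RIP Hermitian and of rank at most two (so that the cross-block pieces $P_UHP_{U^c}+P_{U^c}HP_U$ produced by the compression do not violate the order-$2$ constraint), passing cleanly between the $\ell_1$-based RIP and the squared-$\ell_2$ fidelity while tracking the $\sqrt m$ and $1/\sqrt m$ factors, and choosing the Young-inequality weights so that the on-support term $\|H_\Omega\|_1$ is absorbed sharply rather than losing a spurious factor of $k$. Orchestrating these so that the final coefficient stays below $1$ under (\ref{eqn: condition_a}) and the leftover terms collapse into the single expression (\ref{eqn: conclusion1}) is the technical heart; once the bookkeeping is organized into $\alpha$, the remaining algebra is routine.
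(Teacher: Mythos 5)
Your overall template (basic inequality from optimality, cone condition, shelling, RIP sandwich, self-bounding inequality solved under \eqref{eqn: condition_a}) is the same as the paper's, but the two ingredients you yourself flag as ``the technical heart'' are exactly the ones your plan does not resolve, and they are not routine bookkeeping. The paper's argument closes only because it works at the \emph{vector} level: it shells the entries of $\vx^{\#}$ on $T_0^c$ (not the row norms of $H$), so that every block with $j\geq 2$ satisfies $H_{T_i,T_j}=\vx^{\#}_{T_i}(\vx^{\#}_{T_j})^{*}$, since $\vx_0$ vanishes off $T_0$. This rank-one product structure is what (i) keeps each symmetrized piece $H_{T_i,T_j}+H_{T_j,T_i}$ Hermitian of rank at most $2$, so the order-$(2,2ak)$ RIP applies at all, and (ii) makes the double tail telescope, $\sum_{i,j\geq2}\|H_{T_i,T_j}\|_F=\bigl(\sum_{i\geq2}\|\vx^{\#}_{T_i}\|_2\bigr)^2\leq\frac{1}{ak}\|\vx^{\#}_{T_0^c}\|_1^{2}=\frac{1}{ak}\|H_{T_0^c,T_0^c}\|_1$, which is the identity that lets the matrix cone condition control a \emph{squared} tail linearly. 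If you instead compress to $P_UHP_U$ with $U$ chosen by row norms of $H$ and treat the remainder as a generic rank-$2$ Hermitian matrix, the cross pieces $P_UHP_{U^c}+P_{U^c}HP_U$ can have rank up to $4$ and the diagonal-block bound $\sum_{j\geq2}\|P_{T_j}HP_{T_j}\|_F\leq(ak)^{-1/2}\|H_{\Omega^c}\|_1$ misses all the off-diagonal tail blocks $H_{T_i,T_j}$, $i\neq j$; neither defect is repaired by Young's inequality.

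The second missing ingredient is the absorption of the cross terms $\sum_{j\geq2}\|H_{T_i,T_j}\|_F$, $i\in\{0,1\}$, into $\|\bar H\|_F$. The paper does this by combining the vector-level optimality bound $\|\vx^{\#}\|_1\leq\|\vx_0\|_1+\|\vw\|_2/\sqrt{2\mu}$ (which requires first reformulating \eqref{eqn: unconstrained_lowrank} as a minimization over vectors via $\|\vx\vx^{*}\|_1=\|\vx\|_1^{2}$ and fixing the global phase so that $\innerp{\vx^{\#},\vx_0}\geq0$) with Lemma \ref{lem: MatrixToVector}, which converts $\|\vx^{\#}_{T_{01}}-\vx_0\|_2\,\|\vx^{\#}_{T_i}\|_2$ into $\sqrt{2}\,\|\bar H\|_F$. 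Your proposal never leaves the matrix level, so you have no mechanism to bound $\|\vx^{\#}_{T_i}\|_2$ or to relate a vector-distance times a vector-norm back to $\|\bar H\|_F$; without this the coefficient of $\|\bar H\|_F$ in the self-referential inequality cannot be pinned down to $\frac{1}{a}+\frac{4\sqrt{2}}{\sqrt{a}}$ and the factor $\bigl(\|\vx_0\|_1+\|\vw\|_2/\sqrt{\mu}\bigr)$ in \eqref{eqn: conclusion1} cannot appear. In short: the skeleton is right, but a correct execution requires switching to the vector formulation, shelling $\vx^{\#}$ rather than $H$, and invoking Lemma \ref{lem: MatrixToVector}; as written, the proposal would stall at the rank constraint and at the cross-term absorption.
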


Combing Lemma \ref{le:xia}, Theorem \ref{th:rank1mod} and Theorem \ref{thm: RIP}, we
have the following corollary:
\begin{coro}\label{th:main}
Assume that $\va_j, j=1,\ldots, m$ are independently  complex Gaussian random
vectors, i.e., $\va_j\sim\mathcal{N}(0,
\frac{1}{2}\mathbf{I}_{d})+\mathcal{N}(0,\frac{1}{2}\mathbf{I}_{d})i$. Assume that
$\vb=\mathcal{A}(\vx_0\vx_0^*)+\vw$  where  $\vw\in \mathbb{R}^m$ is a noise vector
and $\vx_0\in \C^d$ with $\|\vx_0\|_0\leq k$. Assume that $m\gtrsim k\log(ed/k)$. Let
$X^\#$ be the global minimizer of model (\ref{eq: unconstrained SDP}). The following
holds  with probability at least $1-\exp(-cm)$: If $\frac{1}{2}\|\vb\|_2^2> \mu
\|\vx_0\|_1^2+\frac{1}{2}\|\vw\|_2^2$, $\mu >0$ and
\[
\lambda >\frac{ \mu d+\|\mathcal{A}\|(\sqrt{2\mu}\|\vx_0\|_1+\|\vw\|_2)}{\sqrt{2}-1},
\]
then ${\rm rank}(X^\#)= 1$, and $X^\#=\vx^\#(\vx^\#)^*$ satisfies
\[
\|\vx^\#(\vx^\#)^* -\vx_0\vx_0^*\|_F\lesssim \frac{\|\vw\|_2}{\sqrt{\mu k}}\left(\|\vx_0\|_1+\frac{\|\vw\|_2}{\sqrt{\mu}}\right)+\frac{\|\vw\|_2^2}{{2\mu k}}+{\mu k}\left(\frac{\|\vw\|_2 }{\mu k}+\frac{1}{\sqrt{m}}\right)^2.
\]
\end{coro}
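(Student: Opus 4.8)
The plan is to assemble the corollary from the three ingredients already in hand: the rank-one guarantee of Lemma \ref{le:xia}, the stability estimate of Theorem \ref{th:rank1mod}, and the restricted isometry guarantee for complex Gaussian measurements of Theorem \ref{thm: RIP}. The overall strategy is a conditioning argument: first fix a high-probability event on which $\A$ has the required restricted isometry property, and then run the two deterministic results on that event.

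First I would invoke Theorem \ref{thm: RIP}. For $\va_j$ complex Gaussian and $m \gtrsim k \log(ed/k)$, this theorem produces constants $c, C > 0$ such that, with probability at least $1 - \exp(-cm)$, the map $\A$ satisfies the RIP of order $(2, 2ak)$. The only subtlety is that the order parameter $a$ and the resulting constants must be compatible with the hypothesis (\ref{eqn: condition_a}) of Theorem \ref{th:rank1mod}, namely $c - \frac{4\sqrt{2}C}{\sqrt{a}} - \frac{C}{a} > 0$. Since $C/\sqrt{a} \to 0$ as $a \to \infty$ while the lower RIP constant $c$ stays bounded below, one can fix a sufficiently large absolute constant $a \geq 1$ for which (\ref{eqn: condition_a}) holds; enlarging $a$ only changes the sparsity order $2ak$ by a constant factor and hence preserves the sample bound $m \gtrsim k \log(ed/k)$.

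On this event I would then apply the deterministic results in sequence. Because the hypotheses $\frac{1}{2}\|\vb\|_2^2 > \mu \|\vx_0\|_1^2 + \frac{1}{2}\|\vw\|_2^2$, $\mu > 0$ and the stated lower bound on $\lambda$ are precisely those of Lemma \ref{le:xia}, that lemma yields $\text{rank}(X^\#) = 1$. Consequently $\Tr(X^\#) - \|X^\#\|_F = 0$, so $X^\#$ also minimizes (\ref{eqn: unconstrained_lowrank}), as noted after Lemma \ref{le:xia}. Writing $X^\# = \vx^\#(\vx^\#)^*$, Theorem \ref{th:rank1mod} then applies verbatim and delivers the bound (\ref{eqn: conclusion1}).

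Finally I would simplify the constants. With $a$, $c$, $C$ now fixed absolute constants, the quantity $\alpha$ defined in Theorem \ref{th:rank1mod} is also an absolute constant, so the factors $2C\alpha + 2$, the weight $1/\sqrt{a}$, and $\frac{\alpha}{2C}$ appearing in (\ref{eqn: conclusion1}) are all $O(1)$, and $ak$ equals $k$ up to a constant. Absorbing these into the $\lesssim$ notation collapses (\ref{eqn: conclusion1}) to the estimate claimed in Corollary \ref{th:main}. The main obstacle is the first step: verifying that Theorem \ref{thm: RIP} can furnish RIP constants meeting (\ref{eqn: condition_a}) for a constant order $a$ without inflating the sample complexity beyond $k\log(ed/k)$. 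Once the order parameter is calibrated, the remaining work is purely bookkeeping.
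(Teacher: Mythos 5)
Your proposal is correct and matches the paper's own (implicit) argument: the paper derives Corollary \ref{th:main} exactly by combining Theorem \ref{thm: RIP} (which supplies the fixed RIP constants $c=0.12$, $C=2.45$ for order $(2,2ak)$ under $m\gtrsim k\log(ed/k)$, so that a sufficiently large absolute constant $a$ satisfies \eqref{eqn: condition_a}), Lemma \ref{le:xia} for the rank-one conclusion, and Theorem \ref{th:rank1mod} for the error bound, then absorbing the resulting absolute constants into $\lesssim$. Your calibration of $a$ and the conditioning on the RIP event are exactly the bookkeeping the paper leaves to the reader.
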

\begin{remark} According to Corollary \ref{th:main}, the parameter $\lambda$ depends
on $\|{\mathcal A}\|$. We next show $\|\mathcal{A}\|=O((m+d)\sqrt{d})$.
 We assume
that   the singular value decomposition of $X\in \C^{d\times d}$ with $\|X\|_F=1$ is
$X=\sum_{j=1}^d\sigma_j\mathbf{u}_j\mathbf{v}_j^*$. Here,
$\sum_{j=1}^d\sigma_{j}^2=1$. We claim that
$\|\mathcal{A}(\mathbf{u}_j\mathbf{v}_j^*)\|_1=O(m+d)$ holds with probability at
least $1-\exp(-m)$.
 Then we have
\[
\|\mathcal{A}(X)\|_2\leq \|\mathcal{A}(X)\|_1
\leq \sum_{j}\sigma_j\|\mathcal{A}(\mathbf{u}_j\mathbf{v}_j^*)\|_1=O((m+d)\sqrt{d}).
\]
 Note that
\[
\|\mathcal{A}(\mathbf{u}_j\mathbf{v}_j^*)\|_1=\sum_{i} |\langle \va_i, \vu_j\rangle
\langle \va_i, \mathbf{v}_j\rangle| \leq \sqrt{\sum_{j}|\langle \va_i,
\vu_j\rangle|^2}\sqrt{\sum_{j}|\langle \va_i, \mathbf{v}_j\rangle|^2}\leq
(\sqrt{m}+\sqrt{d}+t)^2
\]
holds with probability larger than $1-\exp(-t^2/2)$. Here, the last inequality
follows from the singular values of Gaussian random matrices \cite[Corrollary 5.35]{V10}. By taking $t=\sqrt{m}+\sqrt{d}$, we have
$\|\mathcal{A}(\mathbf{u}_j\mathbf{v}_j^*)\|_1=O(m+d)$.
\end{remark}
\begin{remark}
If we take $\|\vw\|_2=0$ in Corollary \ref{th:main}  then the following holds with
high probability:
\[
\|\vx^\#(\vx^\#)^* -\vx_0\vx_0^*\|_2\lesssim \frac{\mu k}{m}
\]
provided
\[
\lambda >\frac{ \mu d+\|\mathcal{A}\|(\sqrt{2\mu}\|\vx_0\|_1)}{\sqrt{2}-1}.
\]
\end{remark}

\subsection{Notations}
We use $\mathbb{H}^{d\times d}$ to denote  the set of all $d\times d$ Hermitian
matrices. For any $X, Y\in \mathbb{H}^{d\times d}$, set $\langle X, Y\rangle
:=\text{Tr}(X^*Y)$. For $x\in \mathbb{C}$, we use $\mathscr{R}(x)$ and
$\mathscr{I}(x)$ to denote the real and complex parts of $x$, respectively. For $X\in
\mathbb{C}^{d\times d}$, we use $X_{i,:}$ and $X_{:,j}$ to denote the $i$-th row and
$j$-th column of $X$, respectively.  For $S, T\subset\{1,\ldots,d\}$, we  use
$X_{S,T}$ to denote a submatrix of $X$ with the rows indexed in $S$ and columns
indexed in $T$.
 We also set
$\|X\|_1:=\sum_{i,j}{\sqrt{\mathscr{R}(X_{i,j})^2+\mathscr{I}(X_{i,j})^2}}$,
$\|X\|_F:=\sqrt{\sum_{i,j}(\mathscr{R}(X_{i,j})^2+\mathscr{I}(X_{i,j})^2)}$, and
$\|X\|_{1,2}:=\sum_{j}\|X_{:,j}\|_2$. We use $\|X\|_{0,2}$ to denote  the number of
non-zero columns in $X$ and use $\text{vec}(X)\in \C^{d^2}$ to denote the
vectorization of $X\in \mathbb{C}^{d\times d}$.
\subsection{Organization}
The paper is organized as follows. After introducing some useful lemmas in Section 2,
we present the proof of Theorem \ref{th:rank1mod} in Section 3. The proof of Lemma
\ref{le:xia} is presented in Section 4. In Section 5, we make a lot of numerical
experiments, which show our method has better performance over the other known
algorithms for sparse phase retrieval.

\section{ Preliminaries and Lemmas}
The following theorem shows that complex Gaussian random quadratic  map $\A$ satisfies RIP of
order $(2,s)$ with high probability provided $m\gtrsim s\log (d/s)$.

\begin{theo}\cite{XiaXu}\label{thm: RIP}
Assume that the linear measurement $\mathcal{A}(\cdot)$ is defined as
\[
\mathcal{A}(X)=(\va_1^*X\va_1,\ldots,\va_m^*X\va_m),
\]
where  $\va_j$ are independently  complex Gaussian random vectors, i.e.,
$\va_j\sim\mathcal{N}(0, \frac{1}{2}\mathbf{I}_{n\times
n})+\mathcal{N}(0,\frac{1}{2}\mathbf{I}_{n\times n})i$. If
\[
 m\gtrsim s\log (d/s),
 \]
 under the probability at least $1-2\exp(-c_0m)$, the linear map $\mathcal{A}$
satisfies the restricted isometry property  of order $(2,s)$, i.e.
\[
0.12\|X\|_{F}\leq\frac{1}{m}\|\mathcal{A}(X)\|_{1}\leq 2.45\|X\|_{F},
\]
for all $X\in \mathbb{H}^{n\times n}$ with $\text{rank}(X)\leq 2$ and
$\|X\|_{0,2}\leq s$ (also $\|X^{*}\|_{0,2}\leq s$).
\end{theo}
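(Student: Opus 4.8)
This is a standard concentration-plus-covering argument for an $\ell_1/\ell_2$ restricted isometry property, and I would organize it in four steps. \textbf{Step 1 (pointwise expectation).} Fix a Hermitian $X$ with $\rank(X)\le 2$ and $\|X\|_F=1$, and use its spectral decomposition to write $\va^*X\va=\lambda_1|\innerp{\va,\vu_1}|^2+\lambda_2|\innerp{\va,\vu_2}|^2$ with $\lambda_1^2+\lambda_2^2=1$. By unitary invariance of the complex Gaussian, $g_i:=|\innerp{\va,\vu_i}|^2$ are i.i.d.\ standard exponential (mean one), so $\mathbb{E}|\va^*X\va|=\mathbb{E}|\lambda_1 g_1+\lambda_2 g_2|$. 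The upper bound $\le|\lambda_1|+|\lambda_2|\le\sqrt2$ is immediate, while the lower bound follows by minimizing over the pair subject to $\lambda_1^2+\lambda_2^2=1$, the worst case being $\lambda_1=-\lambda_2$, for which $\lambda_1 g_1+\lambda_2 g_2$ is Laplace-distributed with strictly positive mean absolute value; these two universal constants become $0.12$ and $2.45$ after the concentration slack below. \textbf{Step 2 (concentration for a fixed matrix).} Each $|\va_j^*X\va_j|$ is sub-exponential, being the absolute value of a combination of two exponentials, so Bernstein's inequality gives, for fixed $X$,
\[
\Prob\!\left[\Big|\tfrac1m\|\A(X)\|_1-\mathbb{E}|\va^*X\va|\Big|>t\right]\le 2\exp\!\big(-c_1 m\min(t^2,t)\big),
\]
which in the regime $t=O(1)$ is $\exp(-\Omega(m))$.

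\textbf{Step 3 (covering and union bound).} Next I would make the estimate uniform over the model set. Since $X$ is Hermitian with $\|X\|_{0,2}\le s$, its support lies in an $s\times s$ principal block, and there are at most $\binom{d}{s}\le(ed/s)^s$ such blocks. For a fixed block, the rank-$\le 2$ Hermitian matrices on the unit Frobenius sphere are determined by $O(s)$ real parameters and hence admit an $\epsilon$-net of cardinality $(C/\epsilon)^{O(s)}$. The total net therefore has log-cardinality $O\big(s\log(ed/s)+s\log(1/\epsilon)\big)=O(s\log(ed/s))$, so a union bound of Step 2 over the net succeeds exactly when $m\gtrsim s\log(ed/s)$, with failure probability $2\exp(-c_0 m)$.

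\textbf{Step 4 (net to the whole set).} Finally I would transfer the bound from the net to every admissible $X$, using that $Y\mapsto\frac1m\|\A(Y)\|_1$ is a seminorm: for $X$ on the sphere pick a net point $X'$ with $\|X-X'\|_F\le\epsilon$ and control $|\frac1m\|\A(X)\|_1-\frac1m\|\A(X')\|_1|\le\frac1m\|\A(X-X')\|_1$. The subtlety is that $X-X'$ has rank up to $4$ and lives on the union of two $s$-blocks, so I would prove the upper inequality uniformly first on the enlarged rank-$\le 4$, $2s$-column-sparse class, use it to bound the perturbation by $2.45\,\epsilon$, and then choose $\epsilon$ a small constant to close both inequalities on the target class with the constants $0.12$ and $2.45$.

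The main obstacle is Step 4 together with the counting in Step 3: because this is an $\ell_1$ (not $\ell_2$) RIP and the measurements are sub-exponential rather than sub-Gaussian, one must simultaneously keep the covering number at $e^{O(s\log(ed/s))}$ rather than $e^{O(s\log d)}$ so the sample complexity is the near-optimal $s\log(ed/s)$, and handle the rank-$4$ difference $X-X'$, which forces the concentration and the upper estimate to be established on a slightly enlarged rank and sparsity class before the two-sided bound can be closed on the intended class.
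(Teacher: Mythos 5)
The paper does not prove this theorem at all: it is imported verbatim from the cited reference \cite{XiaXu}, so there is no in-paper argument to compare against. Your four-step outline --- reduction to $\lambda_1 g_1+\lambda_2 g_2$ with i.i.d.\ exponential $g_i$ and the worst case $\lambda_1=-\lambda_2$ giving mean absolute value $1/\sqrt{2}$, sub-exponential Bernstein concentration, a net of log-cardinality $O(s\log(ed/s))$ over the $\binom{d}{s}$ principal blocks, and the bootstrap on the enlarged rank-$4$, $2s$-sparse class to absorb the net error --- is the standard and correct route, and it is essentially the argument used in \cite{XiaXu} to establish this result.
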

We also need the following lemma.
\begin{lem}\cite{XiaXu}\label{lem: MatrixToVector}
If $\vx, \vy\in\mathbb{C}^{d}$, and $\innerp{\vx,\vy} \geq 0$. Then
\[
\|\vx\vx^{*}-\vy\vy^{*}\|_{F}^{2}\geq\frac{1}{2}\|\vx\|_{2}^{2}\|\vx-\vy\|_{2}^{2}.
\]
\end{lem}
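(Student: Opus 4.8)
The plan is to turn this Hermitian-matrix inequality into a one-variable scalar inequality, since both sides depend only on the three real parameters
\[
a:=\|\vx\|_2^2,\qquad b:=\|\vy\|_2^2,\qquad c:=\langle\vx,\vy\rangle=\vx^*\vy .
\]
The hypothesis $\langle\vx,\vy\rangle\ge0$ means $c$ is real and nonnegative, whence $\vy^*\vx=\overline c=c$ as well, and Cauchy--Schwarz pins $c$ to the interval $0\le c\le\sqrt{ab}$. First I would expand the left-hand side via $\|M\|_F^2=\Tr(M^2)$ for the Hermitian matrix $M=\vx\vx^*-\vy\vy^*$, using $\Tr(\vx\vx^*\vx\vx^*)=\|\vx\|_2^4$ and $\Tr(\vx\vx^*\vy\vy^*)=(\vx^*\vy)(\vy^*\vx)=c^2$, to obtain
\[
\|\vx\vx^*-\vy\vy^*\|_F^2=a^2+b^2-2c^2 .
\]
On the right-hand side $\|\vx-\vy\|_2^2=a+b-2c$ (again because $c$ is real), so the target reduces to showing $a^2+b^2-2c^2\ge\tfrac12\,a\,(a+b-2c)$.

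Equivalently, I would set
\[
g(c):=\tfrac12 a^2+b^2-\tfrac12 ab+ac-2c^2
\]
and prove $g(c)\ge0$ on the admissible interval $c\in[0,\sqrt{ab}]$. The key structural observation is that $g$ is a \emph{concave} quadratic in $c$ (its leading coefficient is $-2$), so on any interval its graph lies above the chord joining the endpoint values; hence $g(c)\ge\min\{g(0),g(\sqrt{ab})\}$, and it suffices to check nonnegativity at the two endpoints.

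The endpoint $c=0$ is immediate, since $2g(0)=a^2-ab+2b^2=(a-\tfrac b2)^2+\tfrac74 b^2\ge0$. The endpoint $c=\sqrt{ab}$ is the only genuinely delicate step, and I expect it to be the main obstacle. Substituting $a=s^2$, $b=t^2$ with $s,t\ge0$ turns $2g(\sqrt{ab})$ into the quartic $s^4+2s^3t-5s^2t^2+2t^4$; the clean way forward is to notice that $s=t$ is a double root and to factor
\[
s^4+2s^3t-5s^2t^2+2t^4=(s-t)^2\,(s^2+4st+2t^2),
\]
which is manifestly nonnegative for $s,t\ge0$. With both endpoints nonnegative, concavity yields $g(c)\ge0$ throughout the interval, completing the proof. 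As a consistency check, equality forces $c=\sqrt{ab}$ together with $s=t$, i.e.\ $\vx=\vy$, where indeed both sides vanish.
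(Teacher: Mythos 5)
Your argument is correct and complete. The reduction of both sides to the three scalars $a=\|\vx\|_2^2$, $b=\|\vy\|_2^2$, $c=\langle\vx,\vy\rangle\in[0,\sqrt{ab}]$ is valid (the trace expansion gives $a^2+b^2-2c^2$ on the left and $\tfrac12 a(a+b-2c)$ on the right), the quadratic $g(c)=\tfrac12 a^2+b^2-\tfrac12 ab+ac-2c^2$ is indeed concave in $c$ so its minimum over the Cauchy--Schwarz interval is attained at an endpoint, and both endpoint evaluations check out: $2g(0)=a^2-ab+2b^2\ge 0$ and $2g(\sqrt{ab})=s^4+2s^3t-5s^2t^2+2t^4=(s-t)^2(s^2+4st+2t^2)\ge 0$ with $a=s^2$, $b=t^2$. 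Note that the paper does not reproduce a proof of this lemma at all --- it is imported by citation from the reference [XiaXu] --- so there is no in-paper argument to compare against; your proof is a legitimate self-contained elementary verification, and the only cosmetic remark is that the concluding equality-case discussion is not needed for the stated inequality.
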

The following lemma follows from the proof of Theorem 3.1 in  \cite{phaseliftoff}. We
include a proof here for completeness.
\begin{lem}\label{lem: SDP_property}
Suppose that $\langle X, Y\rangle=0$, where $X, Y\succeq 0$. If $\text{rank}(X)=r\geq 1$, then
\begin{equation}
\left\|\lambda\left(\mathbf{I}-\frac{X}{\|X\|_F}\right)-Y\right\|_F\geq \lambda(\sqrt{r}-1).
\end{equation}
Here $\lambda$ is a non-negative constant.
\end{lem}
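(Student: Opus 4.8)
The plan is to exploit the orthogonality $\innerp{X,Y}=0$ to reduce the Frobenius-norm estimate to a scalar optimization over the eigenvalues of $X$. First I would record the elementary fact that for positive semidefinite $X,Y$ the condition $\innerp{X,Y}=\Tr(XY)=0$ forces the column spaces of $X$ and $Y$ to be orthogonal. Writing the spectral decomposition $X=\sum_{i=1}^r\sigma_i\vu_i\vu_i^*$ with $\sigma_i>0$ and orthonormal $\vu_i$, and collecting the eigenvectors into a $d\times r$ matrix $U=[\vu_1,\dots,\vu_r]$, this yields $YU=0$, and in particular $U^*YU=0$.

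Next I would compress the matrix $M:=\lambda(\mathbf{I}-X/\|X\|_F)-Y$ by the map $Z\mapsto U^*ZU$. Since $U$ has orthonormal columns, $\|U^*ZU\|_F\le\|U^*\|_{\mathrm{op}}\|Z\|_F\|U\|_{\mathrm{op}}=\|Z\|_F$, so compression cannot increase the Frobenius norm, giving $\|M\|_F\ge\|U^*MU\|_F$. Using $U^*XU=\diag(\sigma_1,\dots,\sigma_r)$ and $U^*YU=0$, the compression evaluates to the diagonal matrix $U^*MU=\lambda\,\diag\bigl(1-\sigma_i/\|X\|_F\bigr)_{i=1}^r$, whence $\|M\|_F^2\ge\lambda^2\sum_{i=1}^r\bigl(1-\sigma_i/\|X\|_F\bigr)^2$.

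Finally I would minimize the resulting scalar quantity. Setting $t_i:=\sigma_i/\|X\|_F\ge0$, the identity $\|X\|_F=\sqrt{\sum_i\sigma_i^2}$ gives the constraint $\sum_{i=1}^r t_i^2=1$, and expanding yields $\sum_{i=1}^r(1-t_i)^2=r+1-2\sum_{i=1}^r t_i$. Cauchy–Schwarz bounds $\sum_i t_i\le\sqrt{r}\,\bigl(\sum_i t_i^2\bigr)^{1/2}=\sqrt{r}$, so $\sum_i(1-t_i)^2\ge r+1-2\sqrt{r}=(\sqrt{r}-1)^2$. Combining this with the previous display and taking square roots (legitimate since $\lambda\ge0$ and $r\ge1$) gives $\|M\|_F\ge\lambda(\sqrt{r}-1)$, as claimed.

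The only genuinely nontrivial step is the reduction $\innerp{X,Y}=0\Rightarrow YU=0$; the compression inequality and the Cauchy–Schwarz optimization are routine once it is in place. I would justify it by the cyclic identity $\Tr(XY)=\Tr(X^{1/2}YX^{1/2})$, noting that $X^{1/2}YX^{1/2}\succeq0$ has vanishing trace and must therefore be the zero matrix; then $Y^{1/2}X^{1/2}=0$, and applying this to each $\vu_i$ with $\sigma_i>0$ gives $Y^{1/2}\vu_i=0$, hence $Y\vu_i=0$ for all $i$, i.e. $YU=0$.
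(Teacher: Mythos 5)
Your proof is correct, and it takes a genuinely different route from the paper's. The paper first applies the triangle inequality to write $\|\lambda(\mathbf{I}-X/\|X\|_F)-Y\|_F\geq\|\lambda\mathbf{I}-Y\|_F-\lambda$, and then lower-bounds $\|\lambda\mathbf{I}-Y\|_F^2$ by a Pythagorean expansion with respect to the orthogonal projections $U_1U_1^*$ and $U_2U_2^*$ onto the range of $X$ and its complement, using only the scalar fact $\langle U_1U_1^*,Y\rangle=0$ (which follows directly from $0=\langle X,Y\rangle=\sum_i\sigma_i\vu_i^*Y\vu_i$ with $\sigma_i>0$ and $\vu_i^*Y\vu_i\geq0$) to kill the cross term; this yields $\|\lambda\mathbf{I}-Y\|_F\geq\lambda\sqrt{r}$. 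You instead compress the whole matrix $M=\lambda(\mathbf{I}-X/\|X\|_F)-Y$ to the range of $X$ via $Z\mapsto U^*ZU$, use the Frobenius contractivity of this compression, and note that $U^*YU=0$ makes the compressed matrix diagonal, reducing everything to minimizing $\sum_i(1-t_i)^2$ subject to $\sum_i t_i^2=1$, which Cauchy--Schwarz settles. Your route avoids the lossy triangle-inequality step entirely and in fact yields the sharper intermediate bound $\|M\|_F^2\geq\lambda^2\sum_{i=1}^r\bigl(1-\sigma_i/\|X\|_F\bigr)^2$, which degrades to $\lambda^2(\sqrt{r}-1)^2$ only when the nonzero spectrum of $X$ is flat; the paper's argument is closer in spirit to the original PhaseLiftOff proof it cites and needs only the trace identity rather than the full annihilation $YU=0$. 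One small remark: your detour through $X^{1/2}YX^{1/2}=0$ is more than you need --- since $U^*YU\succeq0$ and $\mathrm{Tr}(U^*YU)=\sum_i\vu_i^*Y\vu_i=0$ already forces $U^*YU=0$, which is all your compression step uses.
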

\begin{proof}
First of all, we have
\begin{equation}
\label{eqn: low1}
\left\|\lambda\left(\mathbf{I}-\frac{X}{\|X\|_F}\right)-Y\right\|_F\geq \left\|\lambda\mathbf{I}-Y\right\|_F-\lambda\left\|\frac{X}{\|X\|_F}\right\|_F=\left\|\lambda\mathbf{I}-Y\right\|_F-\lambda,
\end{equation}
Then we estimate the lower bound of $\left\|\lambda\mathbf{I}-Y\right\|_F$. Suppose
that  the singular value decomposition of $X$ is in the form of
\[
X=\sum_{i=1}^r\sigma_i\vu_i \vu_i^*,
\]
 where $U_1=(\vu_1,\ldots,\vu_r)\in
\mathbb{C}^{d\times r}$,  and $\sigma_i>0$, for $i=1,\ldots,r$.  Construct  $U_2\in \mathbb{C}^{d\times (d-r)}$,  which satisfies
  $\mathbf{I}=U_1U_1^*+U_2U_2^*$ and $U_1^*U_2=0$. Then we have
   \begin{equation}
 \label{eqn: low20}
 \begin{aligned}
 \left\|\lambda\mathbf{I}-Y\right\|_F^2&=\left\|\lambda(U_1U_1^*+U_2U_2^*)-Y\right\|_F^2=\left\|\lambda U_1U_1^*+(\lambda U_2U_2^*-Y)\right\|_F^2\\
 &=\|\lambda U_1U_1^*\|_F^2+\|\lambda U_2U_2^*-Y\|_F^2+2\lambda \langle U_1U_1^*, \lambda U_2U_2^*-Y\rangle\\
 &=\|\lambda U_1U_1^*\|_F^2+\|\lambda U_2U_2^*-Y\|_F^2-2\lambda \langle U_1U_1^*, Y\rangle.
 \end{aligned}
 \end{equation}
  The last line follows from $U_1^*U_2=0$.  Since $\sigma_i>0$    and  $Y\succeq 0$, the condition $\langle X,Y\rangle=0$ implies that
 \[
 0=\langle X,Y\rangle=\sum_{i=1}^r\sigma_i\langle \vu_i\vu_i^*,Y\rangle=\sum_{i=1}^r\sigma_i\text{Tr}(\vu_i^*Y\vu_i),
 \]
 which leads to $\langle \vu_i\vu_i^*,Y\rangle= 0,$ $ i=1,\ldots,r$. Therefore, it obtain that
\begin{equation}
\label{lem2: temp}
\langle U_1U_1^*,Y\rangle=\sum_{i=1}^r \langle \vu_i\vu_i^*,Y\rangle=0,
\end{equation}
and (\ref{eqn: low20}) becomes
 \begin{equation}
 \label{eqn: low2}
 \left\|\lambda\mathbf{I}-Y\right\|_F^2 = \|\lambda U_1U_1^*\|_F^2+\|\lambda U_2U_2^*-Y\|_F^2\geq  \|\lambda U_1U_1^*\|_F^2=\lambda^2 r.
 \end{equation}
  Combining (\ref{eqn: low1}) and (\ref{eqn: low2}), we have
 \[
 \left\|\lambda\left(\mathbf{I}-\frac{X}{\|X\|_F}\right)-Y\right\|_F\geq \left\|\lambda\mathbf{I}-Y\right\|_F-\lambda \geq \lambda(\sqrt{r}-1).
 \]
\end{proof}

\section{Proof of Theorem \ref{th:rank1mod}}
The aim of this section is to present the proof of Theorem \ref{th:rank1mod}.


\begin{proof}[Proof of Theorem \ref{th:rank1mod}]
Set
 \begin{equation}
\label{eq: rank_1app}
\vx^\#\,\,:=\,\,\argmin{\vx\in \mathbb{C}^d} \mu\|\vx\|_1^2+\frac{1}{2}\sum_{i=1}^m(|\langle \va_i,\vx\rangle|^2-b_i)^2.
\end{equation}
Noting $\exp(i\theta)\vx^\#$ is also a solution to (\ref{eq: rank_1app}) for any
$\theta\in \R$, without loss of generality, we  can assume that
\[
\innerp{\vx^\#,\vx_{0}}\in \R\qquad\text{and}\qquad\innerp{\vx^\#,\vx_{0}}\geq 0.
\]
Then a simple observation  is that $X^{\#}$ is the solution to (\ref{eqn: unconstrained_lowrank}) if and only if $X^\#=\vx^\# (\vx^\#)^*$.

Set $X_0:=\vx_0\vx_0^*$ and
\[
H:=X^\#-X_0=\vx^\#(\vx^\#)^{*}-\vx_{0}\vx_{0}^{*}.
\]
 To prove the conlusion, it is enough to
consider the upper bound of $\|H\|_F$.  Set $T_{0}:=\text{supp}(\vx_{0})$.
 Set $T_{1}$ as the index set which contains the indices of the $ak$ largest
 elements of  $\vx^\#_{T_{0}^{c}}$ in magnitude, and $T_2$ contains the indices of the next $ak$ largest elements, and so on. For simplicity, we set $T_{01}:=T_{0}\cup T_{1}$
and  $\bar{H}:=H_{T_{01},T_{01}}$. Note that
\[
 \|H\|_F\leq \|{\bar H}\|_F+
\sum_{i\geq2,j\geq2}\|H_{T_{i},T_{j}}\|_{F}+2\sum_{j\geq2,
i=0,1}\|H_{T_{i},T_{j}}\|_{F}.
\]
So, it is enough to present  upper bounds for \[\|{\bar H}\|_F,
\sum_{i\geq2,j\geq2}\|H_{T_{i},T_{j}}\|_{F},\  \text{and} \ \sum_{j\geq2,
i=0,1}\|H_{T_{i},T_{j}}\|_{F}.\] We first consider
$\sum_{i\geq2,j\geq2}\|H_{T_{i},T_{j}}\|_{F}$.
  According to
\[
\mu \|X^\#\|_1+\frac{1}{2}\|\mathcal{A}(X^\#)-\vb\|_2^2\leq \mu \|X_0\|_1+\frac{1}{2}\|\mathcal{A}(X_0)-\vb\|_2^2,
\]
we can obtain that
\begin{equation}
\label{eqn: inter1}
\mu\|H-H_{T_0,T_0}\|_1\leq \mu\|H_{T_0,T_0}\|_1+\|\vw\|_2\|\mathcal{A}(H)\|_2-\frac{1}{2}\|\mathcal{A}(H)\|_2^2.
\end{equation}
Here, we use
\[
\|X_0\|_1 = \|H_{T_0,T_0}-X^\#_{T_0,T_0}\|_1\leq \|H_{T_0,T_0}\|_1+\|X^\#_{T_0,T_0}\|_1,
\]
and
\[
\|X^\#\|_1-\|X_{T_0,T_0}^\#\|_1 = \|X^\#-X^\#_{T_0,T_0}\|_1= \|H-H_{T_0,T_0}\|_1.
\]
 Therefore, we have
\begin{equation}\label{eq:term1}
\begin{aligned}
 \sum_{i\geq2,j\geq2}\|H_{T_{i},T_{j}}\|_{F}
& = \sum_{i\geq2,j\geq2}\|\vx^\#_{T_{i}}\|_{2}\cdot \|\vx^\#_{T_{j}}\|_{2}
=\left(\sum_{i\geq2}\|\vx^\#_{T_{i}}\|_{2}\right)^{2}\\
&\leq\frac{1}{ak}\|\vx^\#_{T_{0}^{c}}\|_{1}^{2}=\frac{1}{ak}\|H_{T_{0}^{c},T_{0}^{c}}\|_{1}\leq
\frac{1}{ak} \|H-H_{T_{0},T_{0}}\|_{1}\\
& \leq \frac{1}{ak}\|H_{T_0,T_0}\|_1+\frac{\|\vw\|_2}{\mu ak}\|\mathcal{A}(H)\|_2-\frac{1}{2\mu ak}\|\mathcal{A}(H)\|_2^2\\
& \leq \frac{1}{a}\|H_{T_0,T_0}\|_F+\frac{\|\vw\|_2}{\mu ak}\|\mathcal{A}(H)\|_2-\frac{1}{2\mu ak}\|\mathcal{A}(H)\|_2^2\\
 & \leq \frac{1}{a}\|\bar{H}\|_F+\frac{\|\vw\|_2}{\mu ak}\|\mathcal{A}(H)\|_2-\frac{1}{2\mu ak}\|\mathcal{A}(H)\|_2^2.
 \end{aligned}
\end{equation}
The second line based on $\|\vx^\#_{T_j}\|_2\leq \|\vx^\#_{T_{j-1}}\|_1/\sqrt{ak}$, and the  third line follows from (\ref{eqn: inter1}).

Second, we consider  $\sum_{j\geq 2, i=0,1}\|H_{T_{i},T_{j}}\|_{F}$. Applying that
$\|\vx^\#_{T_j}\|_2\leq \|\vx^\#_{T_{j-1}}\|_1/\sqrt{ak}$, we have
\begin{equation}\label{eq:term2}
\begin{aligned}
\sum_{j\geq2}\|H_{T_{i},T_{j}}\|_{F}&=\|\vx^\#_{T_{i}}\|_{2} \cdot\sum_{j\geq2}\|\vx^\#_{T_{j}}\|_{2}
\leq\frac{1}{\sqrt{ak}}\|\vx^\#_{T_{0}^{c}}\|_{1}\|\vx^\#_{T_{i}}\|_{2}\\
& \leq \frac{1}{\sqrt{a}}\|\vx^\#_{T_{01}}-\vx_0\|_2\|\vx^\#_{T_{i}}\|_{2}+
\frac{\|\vw\|_2}{\sqrt{2\mu ak}}\|\vx_{T_i}^\#\|_2\\
&\leq \frac{\sqrt{2}}{\sqrt{a}}\|\vx^\#_{T_{01}}(\vx^\#_{T_{01}})^*-\vx_0\vx_0^*\|_F
+\frac{\|\vw\|_2}{\sqrt{2\mu ak}}\|\vx_{T_i}^\#\|_2\\
& = \frac{\sqrt{2}}{\sqrt{a}}\|\bar{H}\|_F+
\frac{\|\vw\|_2}{\sqrt{2\mu ak}}\|\vx_{T_i}^\#\|_2,
\end{aligned}
\end{equation}
where $i\in \{0,1\}$. Here, the third line follows from Lemma \ref{lem:
MatrixToVector} and the second line follows from
\begin{equation}\label{eq:claim1}
\|\vx^\#_{T_{0}^c}\|_{1} \leq \sqrt{k}\|\vx^\#_{T_{0}}-\vx_0\|_2+\frac{\|\vw\|_2}{\sqrt{2\mu}}.
\end{equation}
Indeed, noting that
\[
\mu \|\vx^\#(\vx^\#)^*\|_1+\frac{1}{2}\|\mathcal{A}(\vx^\#(\vx^\#)^*)-\vb\|_2^2\leq \mu \|\vx_0\vx_0^*\|_1+\frac{1}{2}\|\mathcal{A}(\vx_0(\vx_0)^*)-\vb\|_2^2,
\]
we obtain that
\[
\mu \|\vx^\#\|_1^2+\frac{1}{2}\|\mathcal{A}(\vx^\#(\vx^\#)^*)-\vb\|_2^2\leq \mu \|\vx_0\|_1^2+\frac{1}{2}\|\mathcal{A}(\vx_0(\vx_0)^*)-\vb\|_2^2
\]
which implies
\begin{equation}
\label{eqn: vector_bound}
\begin{aligned}
\|\vx^\#\|_1&\leq \sqrt{\|\vx_0\|_1^2+\frac{1}{2\mu}\|\mathcal{A}(\vx_0(\vx_0)^*)-\vb\|_2^2-\frac{1}{2\mu}\|\mathcal{A}(\vx^\#(\vx^\#)^*)-\vb\|_2^2}\\
&=\sqrt{\|\vx_0\|_1^2+\frac{1}{2\mu}\|\mathcal{A}(\vx_0(\vx_0)^*)-\vb\|_2^2-\frac{1}{2\mu}\|\mathcal{A}(\vx^\#(\vx^\#)^*)-\mathcal{A}(\vx_0(\vx_0)^*)+\mathcal{A}(\vx_0(\vx_0)^*)-\vb\|_2^2}\\
&=\sqrt{\|\vx_0\|_1^2-\frac{1}{2\mu}\|\mathcal{A}(\vx_0(\vx_0)^*-\vx^\#(\vx^\#)^*)\|_2^2-\frac{1}{\mu}\langle \mathcal{A}(\vx_0(\vx_0)^*-\vx^\#(\vx^\#)^*),\mathcal{A}(\vx_0(\vx_0)^*)-\vb\rangle}\\
&\leq \sqrt{\|\vx_0\|_1^2+\frac{1}{\mu}\|\mathcal{A}(H)\|_2\|\mathcal{A}(\vx_0(\vx_0)^*)-\vb\|_2-\frac{1}{2\mu}\|\mathcal{A}(H)\|_2^2} \\
&=\sqrt{\|\vx_0\|_1^2+\frac{\|\vw\|_2}{\mu}\|\mathcal{A}(H)\|_2-\frac{1}{2\mu}\|\mathcal{A}(H)\|_2^2}\\
&\leq  \|\vx_0\|_1+\sqrt{\max\{0,\frac{\|\vw\|_2}{\mu}\|\mathcal{A}(H)\|_2-\frac{1}{2\mu}\|\mathcal{A}(H)\|_2^2\}}\\
&\leq \|\vx_0\|_1+\frac{\|\vw\|_2}{\sqrt{2\mu}}.
\end{aligned}
\end{equation}
Therefore, we have
\[
\|\vx^\#_{T_{0}^c}\|_{1} \leq-\|\vx^\#_{T_{0}}\|_1+\|\vx_0\|_1+\frac{\|\vw\|_2}{\sqrt{2\mu}} \leq \|\vx^\#_{T_{0}}-\vx_0\|_1+\frac{\|\vw\|_2}{\sqrt{2\mu}}
 \leq \sqrt{k}\|\vx^\#_{T_{0}}-\vx_0\|_2+\frac{\|\vw\|_2}{\sqrt{2\mu}},
\]
which implies (\ref{eq:claim1}).
  Combing
(\ref{eq:term1}) and (\ref{eq:term2}), we have
\begin{equation}\label{eq:HB0}
\begin{aligned}
& \sum_{i\geq2,j\geq2}\|H_{T_{i},T_{j}}\|_{F}+2\sum_{j\geq2, i=0,1}\|H_{T_{i},T_{j}}\|_{F}\\
&\leq \left(\frac{1}{a}+\frac{4\sqrt{2}}{\sqrt{a}}\right)\|\bar{H}\|_F+\frac{2\|\vw\|_2}{\sqrt{\mu ak}}\|\vx^\#_{T_{01}}\|_{2}
 +\frac{\|\vw\|_2}{\mu ak}\|\mathcal{A}(H)\|_2-\frac{1}{2\mu ak}\|\mathcal{A}(H)\|_2^2\\
&\leq  \left(\frac{1}{a}+\frac{4\sqrt{2}}{\sqrt{a}}\right)\|\bar{H}\|_F+
 \frac{2\|\vw\|_2}{\sqrt{\mu ak}}\|\vx^\#_{T_{01}}\|_{2} +\frac{\|\vw\|_2^2}{2\mu ak}.
 \end{aligned}
\end{equation}
Third, we claim that
\begin{equation}\label{eq:HB}
\|{\bar H}\|_F\leq \frac{1}{c-\frac{C}{a}-\frac{4\sqrt{2}C}{\sqrt{a}}} \left(2C\frac{\|\vw\|_2}{\sqrt{\mu ak}}\|\vx^\#_{T_{01}}\|_{2}+\frac{\mu
ak}{2C}\left(\frac{C\|\vw\|_2 }{\mu ak}+\frac{1}{\sqrt{m}}\right)^2\right).
\end{equation}
Combining (\ref{eq:HB0}) and (\ref{eq:HB}), we obtain that
\[
\begin{split}
\|H\|_F&\leq  \|{\bar H}\|_F+\sum_{i\geq2,j\geq2}\|H_{T_{i},T_{j}}\|_{F}+2\sum_{j\geq2, i=0,1}\|H_{T_{i},T_{j}}\|_{F}\\
&\leq \left(1+\frac{1}{a}+\frac{4\sqrt{2}}{\sqrt{a}}\right)\|\bar{H}\|_F+\frac{2\|\vw\|_2}{\sqrt{\mu ak}}\|\vx^\#_{T_{01}}\|_{2}+\frac{\|\vw\|_2^2}{{2\mu ak}}\\
&\leq (2C\alpha+2)\frac{\|\vw\|_2}{\sqrt{\mu ak}}\|\vx^\#_{T_{01}}\|_{2}+\frac{\|\vw\|_2^2}{{2\mu ak}}+\alpha\cdot\frac{\mu ak}{2C}\left(\frac{C\|\vw\|_2 }{\mu ak}+\frac{1}{\sqrt{m}}\right)^2\\
&\leq (2C\alpha+2)\frac{\|\vw\|_2}{\sqrt{\mu ak}}\left(\|\vx_0\|_1+\frac{\|\vw\|_2}{\sqrt{2\mu}}\right)+\frac{\|\vw\|_2^2}{{2\mu ak}}+\alpha\cdot\frac{\mu ak}{2C}\left(\frac{C\|\vw\|_2 }{\mu ak}+\frac{1}{\sqrt{m}}\right)^2,
\end{split}
\]
which leads to the conclusion.  Here, the fourth line is based on
\[
\|\vx_{T_{01}}^\#\|_2\leq \|\vx_{T_{01}}^\#\|_1\leq  \|\vx^\#\|_1\leq \|\vx_0\|_1+\frac{\|\vw\|_2}{\sqrt{2\mu}},
\]
where the last inequality follows from  (\ref{eqn: vector_bound}).

We remain to prove (\ref{eq:HB}). Note that
 \[
 \frac{1}{\sqrt{m}} \|\A(H)\|_2\geq \frac{1}{m}\|\A(H)\|_1\geq \frac{1}{m}\|\A(\bar{H})\|_1-\frac{1}{m}\|\A(H-\bar{H})\|_1,
 \]
 which implies
 \begin{equation}\label{eq:AHup}
 \frac{1}{m}\|\A(\bar{H})\|_1\leq \frac{1}{m}\|\A(H-\bar{H})\|_1+\frac{1}{\sqrt{m}} \|\A(H)\|_2.
 \end{equation}
 Here we can see that
 \[
 \frac{1}{m}\|\A(H-\bar{H})\|_1\leq \sum_{i=0,1}\frac{1}{m}\|\A(H_{T_{i},T_{01}^c}+H_{T_{01}^c,T_{i}})\|_1+\frac{1}{m}\|\A(H_{T_{01}^c,T_{01}^c})\|_1.
 \]
 For $i=0, 1$, since $\mathcal{A}(\cdot)$ satisfy the RIP condition of order $(2,2ak)$ with
upper RIP constant $C$, we have
 \begin{equation}\label{eqn:mid1}
\begin{split}
&\frac{1}{m}\|\A(H_{T_{i},T_{01}^c}+H_{T_{01}^c,T_{i}})\|_1=\frac{1}{m}\left\|\sum_{j\geq 2}\A(H_{T_{i},T_{j}}+H_{T_{j},T_{i}})\right\|_1\\
&\leq \frac{1}{m}\sum_{j\geq 2}\|\mathcal{A}(H_{T_{i},T_{j}}+H_{T_{j},T_{i}})\|_1=\frac{1}{m}\sum_{j\geq 2}\|\mathcal{A}(\vx_{T_i}^\#(\vx_{T_j}^\#)^*+\vx_{T_j}^\#(\vx_{T_i}^\#)^*)\|_1\\&\leq 2C\sum_{j\geq 2}\|\vx_{T_{i}}^\#\|_2\|\vx_{T_j}^\#\|_2\leq \frac{2C}{\sqrt{ak}}\|\vx_{T_0^c}^\#\|_1\|\vx_{T_i}^\#\|_2\\
&\leq  \frac{2C}{\sqrt{a}}\|\bar{H}\|_F+\frac{2C\|\vw\|_2}{\sqrt{2\mu ak}}\|\vx^\#_{T_{i}}\|_{2}.
\end{split}
\end{equation}
Here, the last line follows from (\ref{eq:term2}). On the other hand, based on
(\ref{eq:term1}), we have
\begin{equation}\label{eqn:mid2}
 \begin{split}
 \frac{1}{m}\|\A(H_{T_{01}^c,T_{01}^c})\|_1&\leq \frac{1}{m}\left\|\sum_{i,j\geq 2, i\neq j}\A(H_{T_{i},T_{j}}+H_{T_{j},T_{i}})\right\|_1+\frac{1}{m}\left\|\sum_{i\geq 2}\A(H_{T_{i},T_{i}})\right\|_1\\
 &\leq C \sum_{i\geq2,j\geq2}\|H_{T_{i},T_{j}}\|_{F} \leq \frac{C}{a}\|\bar{H}\|_F+\frac{C\|\vw\|_2 }{\mu ak}\|\mathcal{A}(H)\|_2-\frac{C}{2\mu ak}\|\mathcal{A}(H)\|_2^2.
 \end{split}
 \end{equation}
 As $\mathcal{A}(\cdot)$ satisfy the RIP condition of order $(2,2ak)$ with lower RIP constant $c>0$, combining (\ref{eq:AHup}), (\ref{eqn:mid1}) and (\ref{eqn:mid2}), we obtain that
  \begin{equation}
 \begin{aligned}
 &c\|\bar{H}\|_F\leq \frac{1}{m}\|\A(\bar{H})\|_1\leq  \frac{1}{\sqrt{m}} \|\A(H)\|_2+\frac{1}{m}\|\A(H-\bar{H})\|_1\\
 &\leq  \frac{1}{\sqrt{m}} \|\A(H)\|_2+ \sum_{i=0,1}\frac{1}{m}\|\A(H_{T_{i},T_{01}^c}+H_{T_{01}^c,T_{i}})\|_1+\frac{1}{m}\|\A(H_{T_{01}^c,T_{01}^c})\|_1\\
 &\leq  \frac{1}{\sqrt{m}} \|\A(H)\|_2+2C\frac{\|\vw\|_2}{\sqrt{\mu ak}}(\|\vx^\#_{T_{0}}\|_{2}+\|\vx^\#_{T_{1}}\|_{2})+ \left(\frac{C}{a}+\frac{4\sqrt{2}C}{\sqrt{a}}\right)\|\bar{H}\|_F+\frac{C\|\vw\|_2 }{\mu ak}\|\mathcal{A}(H)\|_2-\frac{C}{2\mu ak}\|\mathcal{A}(H)\|_2^2\\
 &\leq  \frac{1}{\sqrt{m}} \|\A(H)\|_2+2C\frac{\|\vw\|_2}{\sqrt{\mu ak}}\|\vx^\#_{T_{01}}\|_{2}+ \left(\frac{C}{a}+\frac{4\sqrt{2}C}{\sqrt{a}}\right)\|\bar{H}\|_F+\frac{C\|\vw\|_2 }{\mu ak}\|\mathcal{A}(H)\|_2-\frac{C}{2\mu ak}\|\mathcal{A}(H)\|_2^2,
 \end{aligned}
 \end{equation}
which implies
 \begin{equation*}
 \begin{aligned}
\left(c-\frac{C}{a}-\frac{4\sqrt{2}C}{\sqrt{a}}\right) \|\bar{H}\|_F&\leq 2C\frac{\|\vw\|_2}{\sqrt{\mu ak}}\|\vx^\#_{T_{01}}\|_{2}+\left(\frac{C\|\vw\|_2 }{\mu ak}+\frac{1}{\sqrt{m}}\right)\|\mathcal{A}(H)\|_2-\frac{C}{2\mu ak}\|\mathcal{A}(H)\|_2^2\\
&\leq 2C\frac{\|\vw\|_2}{\sqrt{\mu ak}}\|\vx^\#_{T_{01}}\|_{2}+\frac{\mu ak}{2C}\left(\frac{C\|\vw\|_2 }{\mu ak}+\frac{1}{\sqrt{m}}\right)^2.
 \end{aligned}
\end{equation*}
It leads to the inequality (\ref{eq:HB}).
\end{proof}

\section{Proof of Lemma \ref{le:xia}}\label{sec: prove lemma}

Denote $\R^{d\times d}_{\rm sym}$ as the set of symmetric real $d\times d$ matrices,
and $\R^{d\times d}_{\rm skew}$ as  the set of skew-symmetric real $d\times d$
matrices.   If $X\in \mathbb{H}^{d\times d}$, then $X$ can be written as  $X=X_1+iX_2$, where $X_1\in \R^{d\times d}_{\rm sym}$ and $X_2\in
\R^{d\times d}_{\rm skew}$ are the real and imaginary parts of $X$.  Thus the set
$\{X\in \mathbb{H}^{d\times d}\ :\ X\succeq 0\}$ corresponds to
\[
\mathbb{H}_{+}^{d\times d}:=\left\{\begin{bmatrix} {X_1}\\ X_2 \end{bmatrix}:
\left( X_1, \\ X_2 \right)\in \R^{d\times d}_{\rm sym}\times \R^{d\times d}_{\rm skew},  \
 \vz_1^\mathrm{T} X_1\vz_1+\vz_2^\mathrm{T} X_1 \vz_2+\vz_2^\mathrm{T} X_2\vz_1-\vz_1^\mathrm{T} X_2 \vz_2\geq 0\ \text{for}\ \text{all}\ \vz_1,\vz_2\in \mathbb{R}^{d}\right\}.
\]
Let $\widetilde{\mathcal{A}}:\mathbb{R}^{2d\times d} \rightarrow \R^m$ be defined by
\begin{equation}\label{eqn: A}
 \begin{bmatrix} {X_1}\\ X_2 \end{bmatrix}\mapsto (\mathscr{R}(\va_i)^\mathrm{T} X_1\mathscr{R}(\va_i)+\mathscr{I}(\va_i)^\mathrm{T} X_1\mathscr{I}(\va_i)+\mathscr{I}(\va_i)^\mathrm{T} X_2\mathscr{R}(\va_i)-\mathscr{R}(\va_i)^\mathrm{T} X_2 \mathscr{I}(\va_i))_{i=1}^m.
\end{equation}
Then $\mathcal{A}(X)=\widetilde{A}\left(\begin{bmatrix} {X_1}\\ X_2
\end{bmatrix}\right)$. By a simple calculation,  its conjugate map
$\widetilde{\mathcal{A}}^*: \R^m\rightarrow\mathbb{R}^{2d\times d} $ is given by
\begin{equation}\label{eqn: A*}
(b_i)_{i=1}^m\mapsto \begin{bmatrix} {\sum_{i=1}^m b_i\left(\mathscr{R}(\va_i)\mathscr{R}(\va_i)^\mathrm{T}+\mathscr{I}(\va_i)\mathscr{I}(\va_i)^\mathrm{T}\right)}\\ \sum_{i=1}^m b_i\left(\mathscr{I}(\va_i)\mathscr{R}(\va_i)^\mathrm{T}-\mathscr{R}(\va_i)\mathscr{I}(\va_i)^\mathrm{T}\right) \end{bmatrix}.
\end{equation}
For $X=X_1+i X_2\in \mathbb{C}^{d\times d}$, $\|X\|_1$ and $\|X\|_F$ can also be written as
\[
\|X\|_1=\left\|\begin{bmatrix} \text{vec}{(X_1)}\\ \text{vec}(X_2) \end{bmatrix}\right\|_{1,2}=\sum_{i,j}\sqrt{[X_1]_{i,j}^2+[X_2]_{i,j}^2},\quad
\text{and}\quad \|X\|_F=\left\|\begin{bmatrix} {X_1}\\ X_2 \end{bmatrix}\right\|_{F}.
\]
Using the notations above, we recast the model (\ref{eq: unconstrained SDP}) as follows.
\begin{equation}\label{eq: unconstrained SDP equ}
\min\limits_{X_1,X_2} \lambda \left(\text{Tr}(X_1)-\left\|\begin{bmatrix} X_1 \\ X_2 \end{bmatrix}\right\|_{F}
\right)+\mu \left\|\begin{bmatrix} \text{vec}{(X_1)}\\ \text{vec}(X_2) \end{bmatrix}\right\|_{1,2} +\frac{1}{2}\left\|\widetilde{\mathcal{A}}\left(\begin{bmatrix} X_1 \\ X_2 \end{bmatrix}\right)-\vb\right\|_2^2\quad {\rm s.t.}\  \begin{bmatrix} X_1 \\ X_2 \end{bmatrix}\in \mathbb{H}_{+}^{d\times d}.
\end{equation}
If $(X_1^{\#}, X_2^{\#})$ is a minimizer of (\ref{eq: unconstrained SDP equ}), then
the optimal solution $X^{\#}$ of (\ref{eq: unconstrained SDP}) satisfies
$X^{\#}=X_1^{\#}+ iX_2^{\#}$.

In order to prove Lemma \ref{le:xia}, we first introduce some technical lemmas in convex optimization and matrix theory.
Assume that $\Omega\subset \mathbb{R}^n$. We use $T_\Omega(\vx^\#)$  and
$T_\Omega(\vx^\#)^*$ to denote the tangent cone of $\Omega$ at $\vx^\#\in \Omega$ and
its dual cone, respectively. Particularly, we have
\begin{prop}
\label{prop: opt2} If $\Omega$ is a convex cone in $\mathbb{R}^n$ and $\vx^\#\in
\Omega$, then
\[
T_\Omega(\vx^\#)^*=\{\vy\in \mathbb{R}^n\ : \ \langle \vy,\vx\rangle\leq 0 \ \text{for}\ \text{all}\ \vx\in {\Omega},
\ \text{and}\ \langle \vy,\vx^\#\rangle=0\}.
\]
\end{prop}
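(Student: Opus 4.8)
The plan is to prove the identity by first making the tangent cone completely explicit and then computing its dual directly. Recall that for a convex set $\Omega$ and $\vx^\#\in\Omega$ the tangent cone is the closure of the cone of feasible directions, $T_\Omega(\vx^\#)=\overline{\mathrm{cone}(\Omega-\vx^\#)}$ with $\mathrm{cone}(\Omega-\vx^\#)=\{t(\vx-\vx^\#):t\ge 0,\ \vx\in\Omega\}$, and that the dual cone in the convention of the statement is $K^*=\{\vy:\langle \vy,\vz\rangle\le 0 \text{ for all } \vz\in K\}$.

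The first step is a structural identity. Using that $\Omega$ is a convex cone (so $\vO\in\Omega$, $\Omega+\Omega\subseteq\Omega$ and $\R_{\ge 0}\Omega\subseteq\Omega$), I would show
\[
\mathrm{cone}(\Omega-\vx^\#)=\Omega-\R_{\ge 0}\vx^\#.
\]
For $t>0$ the vector $t\vx$ ranges over all of $\Omega$ as $\vx$ does, so $t(\vx-\vx^\#)=(t\vx)-t\vx^\#$ sweeps out $\Omega-\R_{>0}\vx^\#$; the case $t=0$ contributes $\vO\in\Omega$, and the inclusion $\Omega\subseteq\mathrm{cone}(\Omega-\vx^\#)$ is obtained by taking $\vx=\vx^\#+\vu\in\Omega$ for $\vu\in\Omega$, since then $1\cdot(\vx-\vx^\#)=\vu$. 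Putting these together yields the claimed set.

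Next, since the dual cone of a set coincides with that of the cone it generates and with that of its closure (the condition $\langle\vy,\cdot\rangle\le 0$ is closed and insensitive to nonnegative scaling), I get $T_\Omega(\vx^\#)^*=(\Omega-\R_{\ge 0}\vx^\#)^*$. A vector $\vy$ lies in this dual cone exactly when $\langle\vy,\vu\rangle-s\langle\vy,\vx^\#\rangle\le 0$ for every $\vu\in\Omega$ and every $s\ge 0$. Specializing $s=0$ gives $\langle\vy,\vx\rangle\le 0$ for all $\vx\in\Omega$, which is the first defining condition in the statement.

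The delicate point, and the step I expect to be the main obstacle, is upgrading the scalar constraint on $\vx^\#$ from an inequality to the equality $\langle\vy,\vx^\#\rangle=0$; this is where the cone structure is genuinely used. On one hand $\vx^\#\in\Omega$ forces $\langle\vy,\vx^\#\rangle\le 0$, while on the other hand the direction $-\vx^\#$ is itself feasible (take $\vx=\vO$, $t=1$, equivalently let $s\to\infty$ with $\vu=\vx^\#$), forcing $\langle\vy,\vx^\#\rangle\ge 0$, so $\langle\vy,\vx^\#\rangle=0$. Conversely, if $\langle\vy,\vx\rangle\le 0$ for all $\vx\in\Omega$ and $\langle\vy,\vx^\#\rangle=0$, then $\langle\vy,\vu-s\vx^\#\rangle=\langle\vy,\vu\rangle\le 0$ for all $\vu\in\Omega$ and $s\ge 0$, so $\vy$ belongs to the dual cone. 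This establishes both inclusions and completes the proof.
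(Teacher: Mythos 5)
Your proof is correct, and it reaches the same punchline as the paper's by essentially the same trick: the equality $\langle\vy,\vx^\#\rangle=0$ is forced by testing two opposite directions along $\vx^\#$ (you use $\vx^\#\in\Omega$ together with the feasibility of $-\vx^\#$; the paper tests $\vx=2\vx^\#$ and $\vx=\vx^\#/2$ in the normal-cone inequality). Where you genuinely differ is in the front end. The paper simply cites Proposition 4.6.3 of Bertsekas--Nedi\'c--Ozdaglar for the identity $T_\Omega(\vx^\#)^*=\{\vy:\langle\vy,\vx-\vx^\#\rangle\le 0\ \text{for all}\ \vx\in\Omega\}$ and then massages that description, whereas you compute the tangent cone itself from its definition as $\overline{\mathrm{cone}(\Omega-\vx^\#)}$, prove the structural identity $\mathrm{cone}(\Omega-\vx^\#)=\Omega-\R_{\ge 0}\vx^\#$, and dualize directly. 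Your route is self-contained and yields a more explicit picture of the tangent cone (which the paper never writes down), at the cost of the extra bookkeeping in the structural identity; the paper's route is shorter but leans on a textbook result. One small point to watch: your identity and the step ``take $\vu=\vO$, $s=1$'' tacitly assume $\vO\in\Omega$, which holds under the usual convention for convex cones (and certainly for the application $\Omega=\mathbb{H}_+^{d\times d}$); if one adopted a convention with $\vO\notin\Omega$, the feasibility of $-\vx^\#$ would instead follow from $t(\ep\vx^\#-\vx^\#)$ with $t=1/(1-\ep)$. Also note you prove both inclusions explicitly, while the paper only spells out the forward one and leaves the (immediate) converse implicit.
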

\begin{proof}
According to Proposition 4.6.3 in \cite{BNO03}, we have
\begin{equation}\label{eq:coned}
T_\Omega(\vx^\#)^*=\{\vy\in \mathbb{R}^n\ :\ \langle \vy,\vx-\vx^\#\rangle\leq 0 \
\text{for}\ \text{all}\ \vx\in {\Omega}\}.
\end{equation}
 Assume that $\vy\in T_\Omega(\vx^\#)^*$. Then $\langle \vy,\vx-\vx^\#\rangle\leq 0 \
\text{for}\ \text{all}\ \vx\in {\Omega}$. Since $\Omega$ is a cone, we have
$\vx^\#/2, 2\vx^\#\in \Omega$. Taking $\vx=2\vx^\#$, we obtain
$\innerp{\vy,\vx^\#}\leq 0$. Similarly, taking $\vx=\vx^\#/2$, we have
$\innerp{\vy,\vx^\#}\geq 0$. We arrive at $\innerp{\vy,\vx^\#}=0$, which  leads to
$\langle \vy,\vx\rangle\leq 0 \ \text{for}\ \text{all}\ \vx\in {\Omega}$.
\end{proof}

The following theorem provides some properties of local minimum on constrained model.
\begin{prop} \cite[Proposition 4.7.3]{BNO03}
\label{prop: opt1} Let $\vx^\#$ be a local minimizer of the model:
\[
\min_{\mathbf{x}\in \Omega} f_1(\mathbf{x})+f_2(\mathbf{x}),
\]
where $f_1$ is convex and $f_2$ is smooth over a  subset $\Omega$ of $\mathbb{R}^n$.
Assume that the tangent cone $T_{\Omega}(\vx^\#)$ is convex. Then
\[
-\nabla f_2(\vx^\#)\in \partial f_1(\vx^\#)+T_\Omega(\vx^\#)^*.
\]
\end{prop}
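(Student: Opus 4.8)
The plan is to derive the inclusion from a first-order variational inequality on the tangent cone, and then convert that inequality into the desired membership by a separating-hyperplane argument. Throughout I write $\vg:=\nabla f_2(\vx^\#)$, $K:=T_\Omega(\vx^\#)$ (a closed convex cone, using that tangent cones are always closed and that $K$ is convex by hypothesis), and $S:=\partial f_1(\vx^\#)$, which is a nonempty compact convex set because $f_1$ is a finite convex function. Recall also that the one-sided directional derivative of $f_1$ satisfies $f_1'(\vx^\#;\vd)=\max_{\vy\in S}\langle \vy,\vd\rangle=\sigma_S(\vd)$, the support function of $S$.

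First I would establish the variational inequality
\[
f_1'(\vx^\#;\vd)+\langle \vg,\vd\rangle\ge 0 \qquad \text{for every } \vd\in K.
\]
Fix $\vd\in K$ and take, from the definition of the tangent cone, sequences $\vx_\ell\in\Omega$ with $\vx_\ell\to\vx^\#$ and scalars $t_\ell\downarrow 0$ such that $\vd_\ell:=(\vx_\ell-\vx^\#)/t_\ell\to\vd$. Local minimality gives $f_1(\vx_\ell)+f_2(\vx_\ell)\ge f_1(\vx^\#)+f_2(\vx^\#)$ for large $\ell$; dividing by $t_\ell$ and letting $\ell\to\infty$ is the crux. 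For the smooth term, differentiability yields $(f_2(\vx_\ell)-f_2(\vx^\#))/t_\ell\to\langle\vg,\vd\rangle$. For the convex term I would use that $f_1$ is locally Lipschitz with some constant $L$ near $\vx^\#$, so $|f_1(\vx^\#+t_\ell\vd_\ell)-f_1(\vx^\#+t_\ell\vd)|\le L t_\ell\|\vd_\ell-\vd\|=o(t_\ell)$, together with the fact that the convex difference quotient $(f_1(\vx^\#+t\vd)-f_1(\vx^\#))/t$ decreases to $f_1'(\vx^\#;\vd)$ as $t\downarrow 0$; these combine to give $(f_1(\vx_\ell)-f_1(\vx^\#))/t_\ell\to f_1'(\vx^\#;\vd)$. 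Passing to the limit in the minimality inequality produces the claimed bound.

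Next I would rewrite this as $\sigma_S(\vd)\ge\langle-\vg,\vd\rangle$ for all $\vd\in K$ and argue by contradiction that $-\vg\in S+K^*$, where $K^*=T_\Omega(\vx^\#)^*$ is the polar cone $\{\vy:\langle\vy,\vd\rangle\le 0\ \forall\vd\in K\}$. If $-\vg\notin S+K^*$, then since $S$ is compact and $K^*$ is a closed convex cone, the sum $S+K^*$ is closed and convex, so strict separation supplies a vector $\vd$ with $\langle\vd,-\vg\rangle>\sup_{\vz\in S+K^*}\langle\vd,\vz\rangle=\sigma_S(\vd)+\sigma_{K^*}(\vd)$. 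Because $K^*$ is a cone, $\sigma_{K^*}(\vd)$ equals $0$ when $\vd\in(K^*)^*$ and $+\infty$ otherwise; finiteness of the right-hand side therefore forces $\vd\in(K^*)^*$, and the convexity and closedness of $K$ give the bipolar identity $(K^*)^*=K$. Thus $\vd\in K$ and $\langle\vd,-\vg\rangle>\sigma_S(\vd)$, contradicting the variational inequality. Hence $-\vg\in S+K^*$, which is exactly $-\nabla f_2(\vx^\#)\in\partial f_1(\vx^\#)+T_\Omega(\vx^\#)^*$.

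The main obstacle is the limiting step in the variational inequality: the feasible points $\vx_\ell$ approach $\vx^\#$ along directions $\vd_\ell$ that only converge to $\vd$, so one cannot simply evaluate $f_1$ and $f_2$ along the ray $\vx^\#+t\vd$. Reconciling this requires pairing the local Lipschitz continuity of the convex function $f_1$ (to absorb the error $\|\vd_\ell-\vd\|$) with the monotonicity of convex difference quotients (to identify the limit as the directional derivative), while smoothness of $f_2$ handles its term cleanly; the convexity of the tangent cone enters only at the end, through the bipolar identity $(K^*)^*=K$.
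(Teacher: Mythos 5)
Your proof is correct. Note first that the paper does not prove this proposition at all: it is imported as a black box from Proposition 4.7.3 of Bertsekas--Nedi\'c--Ozdaglar, so there is no in-paper argument to compare against; your write-up supplies the missing proof, and it follows what is essentially the standard textbook route. The two-step structure --- first the first-order variational inequality $f_1'(\vx^\#;\vd)+\langle\nabla f_2(\vx^\#),\vd\rangle\ge 0$ for all $\vd$ in the tangent cone, then conversion to the membership statement by strictly separating $-\nabla f_2(\vx^\#)$ from the closed convex set $\partial f_1(\vx^\#)+T_\Omega(\vx^\#)^*$ --- is sound, and the two delicate points are exactly the ones you identify. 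In the limiting step, the feasible directions $\vd_\ell=(\vx_\ell-\vx^\#)/t_\ell$ only converge to $\vd$, and your pairing of the local Lipschitz continuity of the real-valued convex $f_1$ (to absorb the $O(t_\ell\|\vd_\ell-\vd\|)$ error) with the monotone convergence of its difference quotients correctly identifies the limit as $f_1'(\vx^\#;\vd)$, while differentiability of $f_2$ handles its term. In the separation step, the convexity hypothesis on $T_\Omega(\vx^\#)$ is used precisely where it must be: through the bipolar identity $(K^*)^*=K$ for the closed convex cone $K=T_\Omega(\vx^\#)$, which returns the separating direction to $K$ and produces the contradiction. The remaining ingredients --- nonemptiness and compactness of $\partial f_1(\vx^\#)$ for a finite convex function, closedness of the sum of a compact set and a closed cone, additivity of support functions under Minkowski sums, and the max formula $f_1'(\vx^\#;\cdot)=\sigma_{\partial f_1(\vx^\#)}$ --- are all standard and correctly invoked, so I see no gap.
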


We next present the sub-gradient set of  $\partial(\|X\|_1)$:
\begin{prop}(\cite{B99}) \label{prop: subgradient}
Assume that $X=X_1+iX_2$ with $X_1,X_2\in {\mathbb R}^{d\times d}$. Then the
subgradient set of $\|X\|_1$ in real space is
\begin{equation}\label{eqn: l1_subgrad}
\begin{split}
  \partial \left(\left\|\begin{bmatrix}
\text{vec}{(X_1)}\\ \text{vec}(X_2) \end{bmatrix}\right\|_{1,2}\right)
:=\Bigg\{\begin{bmatrix} {G_1}\\ G_2 \end{bmatrix}, G_1,G_2\in \R^{d\times d} \ :\
&[G_1]_{i_1,i_2}^2+[G_2]_{i_1,i_2}^2\leq 1,\ \text{if} \  [X_1]_{i_1,i_2}=[X_2]_{i_1,i_2}=0;\\
& ([G_1]_{i_1,i_2},[G_2]_{i_1,i_2})=\frac{([X_1]_{i_1,i_2},[X_2]_{i_1,i_2})}{\sqrt{[X_1]_{i_1,i_2}^2+[X_2]_{i_1,i_2}^2}},
 \ \text{otherwise}\Bigg\}
\end{split}\end{equation}
\end{prop}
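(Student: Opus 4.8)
The plan is to exploit the fact that the norm in question is a separable sum over matrix entries. Writing $g:\R^2\to\R$ for the planar Euclidean norm $g(a,b)=\sqrt{a^2+b^2}$, we have
\[
\left\|\begin{bmatrix}\text{vec}(X_1)\\ \text{vec}(X_2)\end{bmatrix}\right\|_{1,2}=\sum_{i_1,i_2}g\big([X_1]_{i_1,i_2},[X_2]_{i_1,i_2}\big),
\]
so the objective is a sum of convex summands, each depending only on the single coordinate pair (group) $([X_1]_{i_1,i_2},[X_2]_{i_1,i_2})$, and distinct summands involve disjoint blocks of variables. First I would record the elementary decomposition rule for subdifferentials of such separable convex functions: if $f(\vx)=\sum_k f_k(\vx_{(k)})$ where the $\vx_{(k)}$ range over disjoint blocks of coordinates, then $\partial f(\vx)=\prod_k\partial f_k(\vx_{(k)})$. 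This follows directly from the defining subgradient inequality $f(\vy)\ge f(\vx)+\langle G,\vy-\vx\rangle$ for all $\vy$: summing the block-wise inequalities gives the global one, and conversely, fixing all but one block of $\vy$ equal to $\vx$ reduces the global inequality to the inequality in that block alone.

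This reduces the whole computation to identifying $\partial g$ at an arbitrary point $v=(a,b)\in\R^2$, and here I would distinguish the two cases appearing in the statement. When $v\ne 0$ the Euclidean norm is differentiable and $\nabla g(v)=v/\|v\|_2=(a,b)/\sqrt{a^2+b^2}$, so the subdifferential is the singleton $\{v/\|v\|_2\}$, which is precisely the ``otherwise'' branch of the claimed formula. When $v=0$ I would use that the subdifferential of any norm at the origin is its dual unit ball: $G\in\partial g(0)$ if and only if $\|y\|_2\ge\langle G,y\rangle$ for all $y\in\R^2$, and since the $\ell_2$ norm on $\R^2$ is self-dual this condition is exactly $\|G\|_2\le 1$, i.e.\ $[G_1]^2+[G_2]^2\le 1$, matching the first branch.

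Assembling the separability rule with these two local computations yields the stated set: in every entry $(i_1,i_2)$ with $[X_1]_{i_1,i_2}=[X_2]_{i_1,i_2}=0$ the corresponding pair $([G_1]_{i_1,i_2},[G_2]_{i_1,i_2})$ ranges freely over the closed unit disk, while in every remaining entry it is pinned to $([X_1]_{i_1,i_2},[X_2]_{i_1,i_2})/\sqrt{[X_1]_{i_1,i_2}^2+[X_2]_{i_1,i_2}^2}$. I do not anticipate a genuine obstacle, since this is a standard characterization; the only two points requiring care are stating the separability rule cleanly so that the product of local subdifferentials is correctly matched entry-by-entry to the conditions in the proposition, and justifying the description of $\partial g(0)$ as the dual unit ball, which rests on the self-duality of the Euclidean norm.
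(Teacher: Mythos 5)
Your argument is correct and complete. Note that the paper itself gives no proof of this proposition at all: it is stated with a bare citation to Bertsekas's \emph{Nonlinear Programming}, so there is no in-paper argument to compare against. What you supply is the standard self-contained derivation: the $\|\cdot\|_{1,2}$ norm is a separable sum of planar Euclidean norms over disjoint coordinate pairs, so the subdifferential factors as the Cartesian product of the entrywise subdifferentials; at a nonzero pair the Euclidean norm is differentiable with gradient $v/\|v\|_2$, and at the origin the subdifferential of a norm is its dual unit ball, which for the self-dual $\ell_2$ norm on $\R^2$ is the closed unit disk. Both local computations and the separability rule are justified correctly (the two-directional argument for the product decomposition is exactly right), and together they reproduce the two branches of \eqref{eqn: l1_subgrad}. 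This is a worthwhile addition insofar as it makes the paper's citation-only claim verifiable on the spot; there is no gap.
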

Combining Proposition \ref{prop: opt2}, Proposition
\ref{prop: opt1} with Proposition \ref{prop: subgradient}, we have
\begin{lem}
\label{lem: opt3} Assume that $(X_1^\#, X_2^\#)$ is a local minimizer of model (\ref{eq: unconstrained SDP equ}).
 Then there exist  $ \begin{bmatrix} {\Lambda_1}\\ \Lambda_2 \end{bmatrix}\in \mathbb{H}_+^{d\times d}$ and
  $ \begin{bmatrix} {G_1}\\ G_2 \end{bmatrix}\in \partial \left(\left\|\begin{bmatrix}
    \text{vec}{(X_1^\#)}\\ \text{vec}(X_2^\#)
    \end{bmatrix}\right\|_{1,2}\right)$
  such that the followings  hold:
  
\begin{enumerate}[(i)]
\item Stationary condition:
\begin{equation}
\label{eqn: stationary}
\lambda\left(\begin{bmatrix} \mathbf{I}\\\mathbf{0}\end{bmatrix}- \begin{bmatrix} X_1^\# \\ X_2^\# \end{bmatrix}\Big/\left\|\begin{bmatrix} X_1^\# \\ X_2^\# \end{bmatrix}\right\|_{F}\right)+\mu \begin{bmatrix} G_1\\G_2\end{bmatrix} +\widetilde{\mathcal{A}}^*\left(\widetilde{\mathcal{A}}\left(\begin{bmatrix} X_1^\# \\ X_2^\# \end{bmatrix}\right)-\vb\right)-\begin{bmatrix} \Lambda_1\\ \Lambda_2 \end{bmatrix}=0;
\end{equation}

\item Complementary slackness condition:
\begin{equation}
\label{eqn: complementary} \left\langle\begin{bmatrix} \Lambda_1\\ \Lambda_2 \end{bmatrix}, \begin{bmatrix} X_1^\# \\ X_2^\# \end{bmatrix}\right\rangle=0.
\end{equation}
\end{enumerate}
\end{lem}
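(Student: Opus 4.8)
The plan is to apply the first-order optimality condition of Proposition~\ref{prop: opt1} to the reformulated model (\ref{eq: unconstrained SDP equ}) over the convex cone $\Omega:=\mathbb{H}_{+}^{d\times d}$, and then to translate the resulting membership into the two stated conditions using Propositions~\ref{prop: opt2} and~\ref{prop: subgradient}. Write $\vX^\#:=\begin{bmatrix}X_1^\#\\X_2^\#\end{bmatrix}$ for the minimizer in the real representation. First I would split the objective as $f_1+f_2$, where $f_1(X_1,X_2):=\mu\left\|\begin{bmatrix}\text{vec}(X_1)\\\text{vec}(X_2)\end{bmatrix}\right\|_{1,2}$ is convex and nonsmooth, and $f_2(X_1,X_2):=\lambda\,\text{Tr}(X_1)-\lambda\left\|\begin{bmatrix}X_1\\X_2\end{bmatrix}\right\|_F+\tfrac12\left\|\widetilde{\mathcal{A}}\left(\begin{bmatrix}X_1\\X_2\end{bmatrix}\right)-\vb\right\|_2^2$ collects the smooth pieces. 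The trace term is linear, the data term is quadratic, and $-\lambda\|\cdot\|_F$ is differentiable away from the origin, so $f_2$ is smooth at $\vX^\#$ provided $\vX^\#\neq 0$. Since $\Omega$ is a convex cone, its tangent cone $T_\Omega(\vX^\#)$ is convex, so Proposition~\ref{prop: opt1} applies and gives $-\nabla f_2(\vX^\#)\in\partial f_1(\vX^\#)+T_\Omega(\vX^\#)^*$.

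Next I would compute both sides explicitly. Differentiating term by term yields
\[
\nabla f_2(\vX^\#)=\lambda\begin{bmatrix}\mathbf{I}\\\mathbf{0}\end{bmatrix}-\lambda\frac{\vX^\#}{\|\vX^\#\|_F}+\widetilde{\mathcal{A}}^*\!\left(\widetilde{\mathcal{A}}(\vX^\#)-\vb\right),
\]
while Proposition~\ref{prop: subgradient} identifies $\partial f_1(\vX^\#)=\mu\begin{bmatrix}G_1\\G_2\end{bmatrix}$ for some subgradient $\begin{bmatrix}G_1\\G_2\end{bmatrix}$ in the stated set. Denoting the tangent-cone element in the membership by $-\begin{bmatrix}\Lambda_1\\\Lambda_2\end{bmatrix}$ and rearranging then reproduces exactly the stationary condition (\ref{eqn: stationary}); this is a direct algebraic manipulation once the gradient is in hand.

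It remains to show $\begin{bmatrix}\Lambda_1\\\Lambda_2\end{bmatrix}\in\mathbb{H}_{+}^{d\times d}$ together with the complementary slackness (\ref{eqn: complementary}). By construction $-\begin{bmatrix}\Lambda_1\\\Lambda_2\end{bmatrix}\in T_\Omega(\vX^\#)^*$, so Proposition~\ref{prop: opt2} gives both $\left\langle\begin{bmatrix}\Lambda_1\\\Lambda_2\end{bmatrix},\vX^\#\right\rangle=0$, which is (\ref{eqn: complementary}), and $\left\langle\begin{bmatrix}\Lambda_1\\\Lambda_2\end{bmatrix},\begin{bmatrix}X_1\\X_2\end{bmatrix}\right\rangle\ge 0$ for every $\begin{bmatrix}X_1\\X_2\end{bmatrix}\in\mathbb{H}_{+}^{d\times d}$. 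To upgrade this inequality to positive semidefiniteness I would invoke self-duality of the cone $\mathbb{H}_{+}^{d\times d}$: the real representation $X=X_1+iX_2\mapsto\begin{bmatrix}X_1\\X_2\end{bmatrix}$ is an isometry from the Hermitian matrices onto $\mathbb{R}^{2d\times d}$, since for symmetric $X_1,Y_1$ and skew-symmetric $X_2,Y_2$ one has $\langle X_1,Y_1\rangle_F+\langle X_2,Y_2\rangle_F=\text{Tr}(X_1Y_1)-\text{Tr}(X_2Y_2)=\text{Tr}(XY)=\langle X,Y\rangle$. As the positive semidefinite cone of Hermitian matrices is self-dual, so is $\mathbb{H}_{+}^{d\times d}$, and the inequality forces $\begin{bmatrix}\Lambda_1\\\Lambda_2\end{bmatrix}\in\mathbb{H}_{+}^{d\times d}$.

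The main obstacle is this last step: one must verify carefully that the standard inner product on $\mathbb{R}^{2d\times d}$ pulls back to the Hermitian trace inner product under the real representation (handling the symmetric/skew-symmetric split and the vanishing cross terms $\text{Tr}(\text{sym}\cdot\text{skew})=0$), and then quote self-duality of the PSD cone. A secondary point to dispatch is the smoothness of $f_2$, i.e.\ ensuring $\vX^\#\neq 0$ so that $\|\cdot\|_F$ is differentiable there; in the setting of Lemma~\ref{le:xia} this is guaranteed by the hypothesis $\tfrac12\|\vb\|_2^2>\mu\|\vx_0\|_1^2+\tfrac12\|\vw\|_2^2$, which rules out the zero minimizer.
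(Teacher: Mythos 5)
Your route for the first two conclusions is exactly the paper's: split the objective into the convex nonsmooth part $f_1$ and the smooth part $f_2$, apply Proposition \ref{prop: opt1} over $\Omega=\mathbb{H}_{+}^{d\times d}$ to obtain $-\nabla f_2(\vX^\#)\in\partial f_1(\vX^\#)+T_\Omega(\vX^\#)^*$, read off the stationary condition, and get complementary slackness from Proposition \ref{prop: opt2}. The gap is in your last step. The multiplier $-\begin{bmatrix}\Lambda_1\\ \Lambda_2\end{bmatrix}$ lives in $T_\Omega(\vX^\#)^*$, which is a cone in the \emph{ambient} space $\mathbb{R}^{2d\times d}$, whereas $\mathbb{H}_{+}^{d\times d}$ spans only the proper subspace $\R^{d\times d}_{\rm sym}\times\R^{d\times d}_{\rm skew}$ (real dimension $d^2$, not $2d^2$); your claim that the real representation maps the Hermitian matrices \emph{onto} $\mathbb{R}^{2d\times d}$ is false. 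Consequently $\mathbb{H}_{+}^{d\times d}$ is not self-dual in the ambient space: its dual cone there contains the entire orthogonal complement $\R^{d\times d}_{\rm skew}\times\R^{d\times d}_{\rm sym}$. The inequality $\bigl\langle\begin{bmatrix}\Lambda_1\\ \Lambda_2\end{bmatrix},\begin{bmatrix}X_1\\ X_2\end{bmatrix}\bigr\rangle\geq 0$ for all $\begin{bmatrix}X_1\\ X_2\end{bmatrix}\in\Omega$ therefore constrains only the symmetric part of $\Lambda_1$ and the skew-symmetric part of $\Lambda_2$; it does not force $\Lambda_1$ to be symmetric or $\Lambda_2$ to be skew-symmetric, hence does not place $\begin{bmatrix}\Lambda_1\\ \Lambda_2\end{bmatrix}$ in $\mathbb{H}_{+}^{d\times d}$ as defined. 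The stationarity equation cannot rescue this, because the subgradient pair $(G_1,G_2)$ from Proposition \ref{prop: subgradient} is likewise not a priori symmetric/skew-symmetric.

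The paper closes this by an explicit symmetrization that your proposal is missing: test the dual-cone inequality against $T_1=\vt_1\vt_1^{\mathrm{T}}+\vt_2\vt_2^{\mathrm{T}}$, $T_2=\vt_2\vt_1^{\mathrm{T}}-\vt_1\vt_2^{\mathrm{T}}$ to obtain the quadratic-form condition \eqref{eqn: SDP condition}, then replace $\Lambda_1,\Lambda_2$ by $(\Lambda_1+\Lambda_1^{\mathrm{T}})/2$, $(\Lambda_2-\Lambda_2^{\mathrm{T}})/2$ and simultaneously $G_1,G_2$ by $(G_1+G_1^{\mathrm{T}})/2$, $(G_2-G_2^{\mathrm{T}})/2$, and verify three things: the symmetrized pair is still a subgradient (this uses that $X_1^\#$ is symmetric and $X_2^\#$ skew-symmetric), stationarity is preserved (the remaining terms of \eqref{eqn: stationary} already lie in $\R^{d\times d}_{\rm sym}\times\R^{d\times d}_{\rm skew}$, so only the projections of $\mu G$ and $\Lambda$ matter), and complementary slackness is unchanged. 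Your self-duality observation is then correct \emph{within} that subspace and finishes the argument; without the projection step the conclusion $\begin{bmatrix}\Lambda_1\\ \Lambda_2\end{bmatrix}\in\mathbb{H}_{+}^{d\times d}$ does not follow. Your side remark that $\vX^\#\neq 0$ is needed for differentiability of $\|\cdot\|_F$ is correct and consistent with how the lemma is used.
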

\begin{proof}
Set
\begin{equation*}
\begin{aligned}
f_1(X_1, X_2)&:=\mu \left\|\begin{bmatrix} \text{vec}{(X_1)}\\ \text{vec}(X_2)
\end{bmatrix}\right\|_{1,2},\\
f_2(X_1,X_2)&:=\lambda \left(\text{Tr}(X_1)-\left\|\begin{bmatrix} X_1 \\ X_2
\end{bmatrix}\right\|_{F} \right)
+\frac{1}{2}\left\|\widetilde{\mathcal{A}}\left(\begin{bmatrix} X_1 \\ X_2
\end{bmatrix}\right)-\vb\right\|_2^2,
\end{aligned}
\end{equation*}
 and $\Omega:= \mathbb{H}_{+}^{d\times d}$. Then $f_1$ is convex and $f_2$
is smooth. Since $\Omega$ is convex, we obtain that  $T_\Omega\left(\begin{bmatrix} X_1^\#\\
X_2^\# \end{bmatrix}\right)$ is convex by Proposition 4.6.2 in \cite{BNO03}. According to Proposition \ref{prop: opt1},  there exists $-\begin{bmatrix}
\Lambda_1\\ \Lambda_2 \end{bmatrix}\in T_{\Omega}\left(\begin{bmatrix} X_1^\#\\
X_2^\# \end{bmatrix}\right)^*$ such that the stationary condition (\ref{eqn:
stationary}) holds. Furthermore, we can use  Proposition \ref{prop: opt2} to obtain the complementary slackness condition
(\ref{eqn: complementary}).

 We remain to prove $ \begin{bmatrix} {\Lambda_1}\\ \Lambda_2
\end{bmatrix}\in \mathbb{H}_+^{d\times d}$. Take
$T_1=\vt_1\vt_1^\mathrm{T}+\vt_2\vt_2^\mathrm{T}$ and $T_2=\vt_2\vt_1^\mathrm{T}-\vt_1\vt_2^\mathrm{T}$ for any fixed
$\vt_1,\vt_2\in \mathbb{R}^d$. Then  $\begin{bmatrix} T_1
\\ T_2 \end{bmatrix}\in \Omega$.
By the definition of  $T_{\Omega}\left(\begin{bmatrix} X_1^\#\\ X_2^\#
\end{bmatrix}\right)^*$ and Proposition \ref{prop: opt1}, we obtain that
\[
\left\langle-\begin{bmatrix} \Lambda_1\\
\Lambda_2 \end{bmatrix}, \begin{bmatrix} T_1 \\ T_2 \end{bmatrix}\right\rangle\leq 0,
\]
which implies
\begin{equation}\label{eqn: SDP condition}
\vt_1^\mathrm{T}
\Lambda_1\vt_1+\vt_2^\mathrm{T} \Lambda_1 \vt_2+\vt_2^\mathrm{T} \Lambda_2\vt_1-\vt_1^\mathrm{T}
\Lambda_2 \vt_2\geq 0\ \text{for}\ \text{any}\ \vt_1,\vt_2\in
\mathbb{R}^d.
\end{equation}

If $\left( \Lambda_1, \\ \Lambda_2 \right)\in \R^{d\times d}_{\rm sym}\times
\R^{d\times d}_{\rm skew}$, then we arrive at the conclusion. Otherwise, we can replace $\Lambda_1$ and $\Lambda_2$ by
\[
\widetilde{\Lambda}_1:=\frac{\Lambda_1+\Lambda_1^\mathrm{T}}{2}\ \text{and}\ \widetilde{\Lambda}_2:=\frac{\Lambda_2-\Lambda_2^\mathrm{T}}{2}.
\]
Noting that $(\widetilde{\Lambda}_1, \widetilde{\Lambda}_2)\in \R^{d\times d}_{\rm
sym}\times \R^{d\times d}_{\rm skew}$ and
\[
 \left\langle-\begin{bmatrix} \widetilde{\Lambda}_1\\
\widetilde{\Lambda}_2 \end{bmatrix}, \begin{bmatrix}
T_1 \\ T_2 \end{bmatrix}\right\rangle=\left\langle-\begin{bmatrix} \Lambda_1\\
\Lambda_2 \end{bmatrix}, \begin{bmatrix} T_1 \\ T_2 \end{bmatrix}\right\rangle\leq 0,
\]
 we obtain that
 $ \begin{bmatrix} {\widetilde{\Lambda}_1}\\ \widetilde{\Lambda}_2 \end{bmatrix}\in \mathbb{H}_+^{d\times d}$.
 After a simple calculation, we also have
\[
\lambda\left(\begin{bmatrix} \mathbf{I}\\\mathbf{0}\end{bmatrix}- \begin{bmatrix} X_1^\# \\ X_2^\# \end{bmatrix}\Big/\left\|\begin{bmatrix} X_1^\# \\ X_2^\# \end{bmatrix}\right\|_{F}\right)+\mu \begin{bmatrix} \widetilde{G}_1\\ \widetilde{G}_2\end{bmatrix} +\widetilde{\mathcal{A}}^*\left(\widetilde{\mathcal{A}}\left(\begin{bmatrix} X_1^\# \\ X_2^\# \end{bmatrix}\right)-\vb\right)-\begin{bmatrix} \widetilde{\Lambda}_1\\ \widetilde{\Lambda}_2 \end{bmatrix}=0,
\]
and
 \[
 \left\langle\begin{bmatrix} \widetilde{\Lambda}_1\\ \widetilde{\Lambda}_2 \end{bmatrix}, \begin{bmatrix} X_1^\# \\ X_2^\# \end{bmatrix}\right\rangle= \left\langle\begin{bmatrix} \Lambda_1\\ \Lambda_2 \end{bmatrix}, \begin{bmatrix} X_1^\# \\ X_2^\# \end{bmatrix}\right\rangle=0,
 \]
where  $\widetilde{G}_1:=\frac{G_1+G_1^\mathrm{T}}{2},
\widetilde{G}_2:=\frac{G_2-G_2^\mathrm{T}}{2}$ and
\[
  \begin{bmatrix} {\widetilde{G}_1}\\ \widetilde{G}_2 \end{bmatrix}\in \partial \left(\left\|\begin{bmatrix}
    \text{vec}{(X_1^\#)}\\ \text{vec}(X_2^\#)
    \end{bmatrix}\right\|_{1,2}\right).
  \]
 Therefore, the stationary condition (\ref{eqn:
stationary}) and  complementary slackness condition   (\ref{eqn: complementary}) also hold
for
  $\begin{bmatrix} {{\Lambda}_1}\\
{\Lambda}_2 \end{bmatrix}:=\begin{bmatrix} {\widetilde{\Lambda}_1}\\
\widetilde{\Lambda}_2 \end{bmatrix}$.
\end{proof}

We next present the proof of Lemma \ref{le:xia}.
\begin{proof}[Proof of Lemma \ref{le:xia}]
Since $\frac{1}{2}\|\vb\|_2^2> \mu \|\vx_0\|_1^2+\frac{1}{2}\|\vw\|_2^2$, we obtain
that $X^\#\neq 0$.

We next consider the equivalent model (\ref{eq: unconstrained SDP equ}) with global minimizer $(X_1^\#, X_2^\#)$.
 According to Lemma {\ref{lem: opt3}},  there exist
  $\begin{bmatrix} {\Lambda_1}\\ \Lambda_2 \end{bmatrix}\in \mathbb{H}_+^{d\times d}$
   and $\begin{bmatrix} G_1\\G_2\end{bmatrix} \in \partial \left(\left\|\begin{bmatrix} \text{vec}{(X_1^\#)}\\ \text{vec}(X_2^\#) \end{bmatrix}\right\|_{1,2}\right)$ such that the following holds:
    \begin{equation}
\label{eqn: stationary1}
\lambda\left(\begin{bmatrix} \mathbf{I}\\\mathbf{0}\end{bmatrix}- \begin{bmatrix} X_1^\# \\ X_2^\# \end{bmatrix}\Big/\left\|\begin{bmatrix} X_1^\# \\ X_2^\# \end{bmatrix}\right\|_{F}\right)+\mu \begin{bmatrix} G_1\\G_2\end{bmatrix} +\widetilde{\mathcal{A}}^*\left(\widetilde{\mathcal{A}}\left(\begin{bmatrix} X_1^\# \\ X_2^\# \end{bmatrix}\right)-\vb\right)-\begin{bmatrix} \Lambda_1\\ \Lambda_2 \end{bmatrix}=0;
\end{equation}
and
\begin{equation}
\label{eqn: complementary1} \left\langle\begin{bmatrix} \Lambda_1\\ \Lambda_2 \end{bmatrix}, \begin{bmatrix} X_1^\# \\ X_2^\# \end{bmatrix}\right\rangle=0.
\end{equation}
According to (\ref{eqn: stationary1}), we obtain that
\begin{equation}\label{eqn: norm_est}
\begin{aligned}
\left\|\widetilde{\mathcal{A}}^*\left(\widetilde{\mathcal{A}}\left(\begin{bmatrix} X_1^\# \\ X_2^\# \end{bmatrix}\right)-\vb\right)+\mu \begin{bmatrix} G_1\\G_2\end{bmatrix}\right\|_F
&=\left\|\lambda\left(\begin{bmatrix} \mathbf{I}\\\mathbf{0}\end{bmatrix}- \begin{bmatrix} X_1^\# \\ X_2^\# \end{bmatrix}\Big/\left\|\begin{bmatrix} X_1^\# \\ X_2^\# \end{bmatrix}\right\|_{F}\right)-\begin{bmatrix} \Lambda_1\\ \Lambda_2 \end{bmatrix}\right\|_F\\
& =\left\|\lambda\left(\mathbf{I}-\frac{X^\#}{\|X^\#\|_F}\right)-\Lambda\right\|_F\\
&\geq \lambda(\sqrt{r}-1),
\end{aligned}
\end{equation}
where $\Lambda:=\Lambda_1+i\Lambda_2 \in {\mathbb C}^{d\times d}$ and $r:={\rm
rank}(X^\#)$. The last inequality in (\ref{eqn: norm_est}) follows from (\ref{eqn: complementary1}) and Lemma
\ref{lem: SDP_property}.

On the other hand, we have
\begin{equation}
\label{eqn: upper}
\begin{aligned}
\left\|\widetilde{\mathcal{A}}^*\left(\widetilde{\mathcal{A}}\left(\begin{bmatrix} X_1^\# \\ X_2^\# \end{bmatrix}\right)-\vb\right)+\mu \begin{bmatrix} G_1\\G_2\end{bmatrix}\right\|_F&\leq \left\|\widetilde{\mathcal{A}}^*\left(\widetilde{\mathcal{A}}\left(\begin{bmatrix} X_1^\# \\ X_2^\# \end{bmatrix}\right)-\vb\right)\right\|_F+\mu\left\|\begin{bmatrix} G_1\\G_2\end{bmatrix}\right\|_F\\
&\leq  \|\mathcal{A}\|\left\|\widetilde{\mathcal{A}}\left(\begin{bmatrix} X_1^\# \\ X_2^\# \end{bmatrix}\right)-\vb\right\|_2+\mu d\\
&=\|\mathcal{A}\|\|\mathcal{A}(X^\#)-\vb\|_2+\mu d\\
&\leq \|\mathcal{A}\|\sqrt{2\mu \|\vx_0\|_1^2+\|\vw\|_2^2}+\mu d\\
&\leq \|\mathcal{A}\|(\sqrt{2\mu}\|\vx_0\|_1+\|\vw\|_2)+\mu d.
\end{aligned}
\end{equation}
Here, the second inequality follows from  Proposition \ref{prop: subgradient} and $[G_1]_{i_1,i_2}^2+[G_1]_{i_1,i_2}^2\leq 1$ for any $i_1,i_2\in \{1,...,d\}$.
Combing (\ref{eqn: norm_est}) and (\ref{eqn: upper}), we obtain that
\[
\lambda({\sqrt{r}-1})\leq {\mu d+\|\mathcal{A}\|(\sqrt{2\mu}\|\vx_0\|_1+\|\vw\|_2)}.
\]
By the assumption on $\lambda$ in (\ref{eq:lam}) as
\[
\lambda >\frac{ \mu d+\|\mathcal{A}\|(\sqrt{2\mu}\|\vx_0\|_1+\|\vw\|_2)}{\sqrt{2}-1},
\]
we have
\[
\frac{\sqrt{r}-1}{\sqrt{2}-1}\left({ \mu d+\|\mathcal{A}\|(\sqrt{2\mu}\|\vx_0\|_1+\|\vw\|_2)}\right)\leq {\mu d+\|\mathcal{A}\|(\sqrt{2\mu}\|\vx_0\|_1+\|\vw\|_2)}.
\]
Thus the only proper choice of $r$ is $r=1$.
\end{proof}

\section{Algorithms for solving Sparse PhaseLiftOff }

\subsection{The DCA algorithm}
In this section,  we establish an algorithm to  solve the Sparse PhaseLiftOff model
(\ref{eq: unconstrained SDP}), which is stated in Algorithm \ref{alg1}.
 \begin{algorithm}[t]
  \caption{The DCA Algorithm for solving  model (\ref{eq: unconstrained SDP})}
  \label{alg1}
  \begin{algorithmic}[1]
\State {\bf Input:} the map ${\mathcal A}$, the vector $\vb$, the tolerance error
${\rm tol}\geq 0$, the parameters $\lambda, \mu$ and MAXiter.
     \State {\bf Output:} A matrix $X^\#$.
    \State {\bf Initial:} ${X}^{0}=\mathbf{0}$.
  \State{\bf Loop:} {\bf for} $k=0$ {\bf to} MAXiter

  $
{Y}^{k}=\begin{cases}
\text{\ensuremath{\frac{{X}^{k}}{\|{X}^{k}\|_{F}}}} & \text{if}\ {X}^{k}\neq\mathbf{0}\\
\mathbf{0} & \text{if}\ {X}^{k}=\mathbf{0}
\end{cases}
$

\begin{equation}\label{eq: xk_iteration}
{X}^{k+1}=\underset{{X}\succeq0}{\text{argmin}}\Big\{\frac{1}{2}\|\mathcal{A}({X})-{\mathbf{b}}\|_{2}^{2}+\lambda\text{Tr}(X)-\lambda\langle {X},{Y}^{k}\rangle+\mu\|{X}\|_1\Big\}
\end{equation}
    {\bf If} {$\frac{\|{X}^{k}-{X}^{k-1}\|_{F}}{\text{\ensuremath{\max}}\{\|{X}^{k}\|_{F},1\}}\leq \text{tol}$} {\bf then} {\bf break}
\State $X^\#=X^k$.
  \end{algorithmic}
\end{algorithm}
Our algorithm is based on DCA, which is a descent method introduced by Tao and An
\cite{TA97,TA88}. DCA is also studied in compressed sensing, and in matrix recovery
problem (see \cite{XL18,phaseliftoff,YLX15}).

The step 6 of  Algorithm \ref{alg1} is to solve a subproblem (\ref{eq: xk_iteration}). We
suggest  employing ADMM method  \cite{BP11} to solve it, which is shown in Algorithm 2.    The convergence rate of ADMM was established in \cite{HY12}.
To derive  ADMM, we rewrite (\ref{eq: xk_iteration}) as
 \begin{equation}\label{eq:X123}
\underset{{X_3}\succeq0, X_3=X_1, X_3=X_2}{\text{min}}\frac{1}{2}\|\mathcal{A}({X_1})-{\mathbf{b}}\|_{2}^{2}+\lambda\text{Tr}(X_1)-\lambda\langle {X_1},{Y}^{k}\rangle+\mu\|{X_2}\|_1.
 \end{equation}
 The problem  (\ref{eq:X123}) is called global consensus problem  \cite[Equation (7.2)]{BP11} with local variables $X_1$ and $X_2$ and a common global variable $X_3$.
The augmented Lagrangian function corresponding to (\ref{eq:X123}) is
 \[
 \begin{split}
 \mathcal{L}_{\delta}(X_1,X_2,X_3,Y_1,Y_2)=&\frac{1}{2}\|\mathcal{A}({X_1})-{\mathbf{b}}\|_{2}^{2}
 +\langle X_1, \lambda(\mathbf{I}-Y^k)\rangle+\mu\|X_2\|_1+g_\succeq(X_3)\\
 &+\langle Y_1, X_1-X_3\rangle+\langle Y_2, X_2-X_3\rangle
+\frac{\delta}{2}\|X_1-X_3\|_F^2+\frac{\delta}{2}\|X_2-X_3\|_F^2,
 \end{split}
 \]
 where $Y_1,Y_2$ are dual variables, $\delta$ is augmented Lagrangian parameter
and
 \[
 g_{\succeq 0}(Z)=\begin{cases}
0 & \text{if}\ Z\succeq 0,\\
\infty & \text{otherwise}.
\end{cases}
 \]
 We can employ the standard  ADMM to solve
 \begin{equation}\label{eq:augX123}
\min_{X_1,X_2,X_3,Y_1,Y_2}  \mathcal{L}_{\delta}(X_1,X_2,X_3,Y_1,Y_2),
 \end{equation}
 which consists of updating  on both the primal and   dual variables \cite[Equation (7.3)-Equation (7.5)]{BP11}:
  \begin{equation}\label{eqn: ADMM}
 \begin{cases}
X_1^{l+1}=\arg\min_{X_1}  \mathcal{L}_{\delta}(X_1,X_2^l,X_3^l,Y_1^l,Y_2^l)\\
X_2^{l+1}=\arg\min_{X_2}  \mathcal{L}_{\delta}(X_1^{l+1},X_2,X_3^l,Y_1^l,Y_2^l)\\
X_3^{l+1}=\arg\min_{X_3}  \mathcal{L}_{\delta}(X_1^{l+1},X_2^{l+1},X_3,Y_1^l,Y_2^l)\\
Y_1^{l+1}=Y_1^{l}+\delta(X_1^{l+1}-X_3^{l+1})\\
Y_2^{l+1}=Y_2^{l}+\delta(X_2^{l+1}-X_3^{l+1})\end{cases}
 \end{equation}
 According to \cite{BP11}, $\delta$ can be fixed or adaptively
 updated following the rules below:
 \[
 \delta^{l+1}=\begin{cases}
2\delta^{l} & \text{if}\ \|R^{l}\|_F>10\|S^{l}\|_F\\
\delta^{l}/2 & \text{if}\  \|R^{l}\|_F<\frac{1}{10}\|S^{l}\|_F\\
\delta^{l} &\text{otherwise}\end{cases},
 \]
 where $\|R^{l}\|_F^2=\|X_1^l-X_3^l\|_F^2+\|X_2^l-X_3^l\|_F^2$, and $\|S^{l}\|_F^2=2(\delta^l)^2\|X_3^l-X_3^{l-1}\|_F^2$.

 More explicitly, we  state ADMM algorithm for solving (\ref{eqn: ADMM})  in Algorithm 2.   In
Algorithm 2, we use  $\mathcal{S}_{\lambda}: \mathbb{C}^{n\times n}\rightarrow
\mathbb{C}^{n\times n}$ to denote the soft-thresholding operator on each elements of
the matrix, i.e.,
\[
[\mathcal{S}_{\lambda}(Z)]_{i,j}=\begin{cases} {(|Z_{i,j}|-\lambda)\frac{Z_{i,j}}{|Z_{i,j}|}} & |Z_{i,j}|\geq \lambda,\\
0 &\text{otherwise}.\end{cases}
\]
   We use $\mathcal{P}_{\succeq}: \mathbb{H}^{n\times n}\rightarrow \mathbb{H}^{n\times n}$
    to denote the projection on the the positive semidefinite cone, i.e.,
    \[
    \mathcal{P}_{\succeq}(X)=U\max\{\Sigma, \mathbf{0}\}U^*,
    \]
    where $X=U\Sigma U^*$ is the eigenvalue decomposition of $X$.


\begin{algorithm}[t]
  \caption{ADMM for solving the subproblem (\ref{eq: xk_iteration})}
  \label{alg2}
  \begin{algorithmic}[1]
  \State {\bf Input:} the map ${\mathcal A}$, the vector $\vb$, $k$, $W=\lambda(\mathbf{I}-Y^k)$, the parameters $\lambda, \mu$, $\delta$ and MAXiter.
     \State {\bf Output:} A matrix $X^{k+1}$.
    \State {\bf Initial:} ${X}_{1}^0={X}_{2}^0={X}_{3}^0={Y}_{1}^0={Y}_{2}^0=\mathbf{0}$.
    \State{\bf Loop:} {\bf for} $l=0$ {\bf to} MAXiter

      $
      X_1^{l+1}=(\mathcal{A}^*\mathcal{A}+\delta\mathbf{I})^{-1}(\mathcal{A}^*(\mathbf{b})-W+\delta X_3^{l}-Y_1^{l})
      $

$ X_2^{l+1}=\mathcal{S}_{\mu/\delta}(X_3^{l}-\frac{1}{\delta}Y_2^{l})$

$
X_3^{l+1}=\mathcal{P}_{\succeq}\left(\frac{1}{2}(X_1^{l+1}+X_2^{l+1})+\frac{1}{2\delta}(Y_1^{l}+Y_2^{l})\right)
$

$ Y_1^{l+1}=Y_1^{l}+\delta(X_1^{l+1}-X_3^{l+1}) $

$ Y_2^{l+1}=Y_2^{l}+\delta(X_2^{l+1}-X_3^{l+1}) $
     \State
 $X^{k+1}=X_3^{l}.$
  \end{algorithmic}
\end{algorithm}

\subsection{The Convergence property of Algorithm \ref{alg1}}
The aim of this subsection is to study the convergence property of Algorithm \ref{alg1}.
Motivated by the techniques developed in \cite{phaseliftoff} and \cite{YLX15}, we
will show that Algorithm \ref{alg1} converges to a stationary point.
For convenience, we set
\[
  F(X):=\lambda (\text{Tr}(X)-\|X\|_F)+\mu \|X\|_1+\frac{1}{2}\|\mathcal{A}(X)-\vb\|_2^2.
  \]
 We first show that $\{F(X^{k})\}_{k\geq 1}$ generated by Algorithm \ref{alg1} is    a monotonically decreasing sequence.
 \begin{lem}\label{eqn: Fk decreasing}
 If $\{X^k\}_{k\geq 1}$ is a sequence generated by Algorithm \ref{alg1}, then we have
 \[
 F(X^k)-F(X^{k+1})\geq 0, \quad  \text{ for all } k\geq 0.
 \]
 \end{lem}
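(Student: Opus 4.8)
The plan is to exploit the standard difference-of-convex (DC) descent mechanism that underlies DCA. First I would write $F = G - H$, where
\[
G(X) := \tfrac{1}{2}\|\mathcal{A}(X)-\vb\|_2^2 + \lambda\,\mathrm{Tr}(X) + \mu\|X\|_1 + \iota_{\succeq 0}(X)
\]
with $\iota_{\succeq 0}$ the indicator function of the positive semidefinite cone, and $H(X) := \lambda\|X\|_F$. Both $G$ and $H$ are convex, so $F$ is a DC function on $\mathbb{H}^{d\times d}$, and the update (\ref{eq: xk_iteration}) is exactly the DCA step: linearize $H$ at $X^k$ and minimize $G$ minus that linear term. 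The key observation is that $\lambda Y^k \in \partial H(X^k)$. Indeed, for $X^k \neq \mathbf{0}$ we have $\partial H(X^k) = \{\lambda X^k/\|X^k\|_F\}$, which matches the definition of $Y^k$ in Algorithm \ref{alg1}; and for $X^k = \mathbf{0}$ the convention $Y^k = \mathbf{0}$ gives $\lambda Y^k = \mathbf{0}\in \partial H(\mathbf{0}) = \{Z : \|Z\|_F \leq \lambda\}$ (the Frobenius norm being self-dual).

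Next I would extract the two inequalities that drive the descent. Since $X^{k+1}$ is the minimizer of the convex subproblem (\ref{eq: xk_iteration}), whose objective is precisely $G(X) - \langle X, \lambda Y^k\rangle$, comparing its value at $X^{k+1}$ with its value at the feasible point $X^k$ yields
\[
G(X^{k+1}) - \langle X^{k+1}, \lambda Y^k\rangle \leq G(X^k) - \langle X^k, \lambda Y^k\rangle.
\]
On the other hand, the subgradient inequality for the convex function $H$ at $X^k$, with subgradient $\lambda Y^k$, reads
\[
H(X^{k+1}) \geq H(X^k) + \langle \lambda Y^k,\, X^{k+1} - X^k\rangle.
\]
Writing $F(X^k) - F(X^{k+1}) = \bigl[G(X^k) - G(X^{k+1})\bigr] + \bigl[H(X^{k+1}) - H(X^k)\bigr]$ and substituting the two displayed bounds, the inner-product terms $\pm\langle \lambda Y^k,\, X^{k+1} - X^k\rangle$ cancel exactly, leaving $F(X^k) - F(X^{k+1}) \geq 0$, which is the claim.

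The argument is essentially algebraic once the DC structure is in place, so I do not expect a serious obstacle. The one point requiring care is the nonsmoothness of $\|\cdot\|_F$ at the origin: the hard part will be verifying that the convention $Y^k = \mathbf{0}$ when $X^k = \mathbf{0}$ still supplies a legitimate element of $\partial H(X^k)$, so that the subgradient inequality above remains valid; as noted this holds because $\mathbf{0}$ lies in the subdifferential of $\lambda\|\cdot\|_F$ at $\mathbf{0}$. A secondary point is that the descent requires $X^{k+1}$ to be an exact minimizer of (\ref{eq: xk_iteration}), which I would record as the standing assumption on the inner solve (the ADMM of Algorithm \ref{alg2}). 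If a strict or quantitative decrease were wanted, one could additionally invoke strong convexity coming from the data-fidelity term, but for the stated inequality $F(X^k) - F(X^{k+1}) \geq 0$ the exact cancellation already suffices.
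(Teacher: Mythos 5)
Your proof is correct, and it takes a genuinely different route from the paper's. You use the generic DC descent argument: split $F=G-H$ with $G$ convex (absorbing the PSD constraint into an indicator) and $H(X)=\lambda\|X\|_F$ convex, check that $\lambda Y^k\in\partial H(X^k)$ (including the $X^k=\mathbf{0}$ case via self-duality of the Frobenius norm), and then add the minimality inequality for the subproblem to the subgradient inequality for $H$ so that the linear terms cancel. The paper instead invokes the first-order optimality conditions of the subproblem (stationarity with a multiplier $\Lambda^{k+1}\succeq 0$ and complementary slackness, derived in the real representation $(X_1,X_2)$ of Hermitian matrices) and expands $F(X^k)-F(X^{k+1})$ explicitly into the identity
\[
F(X^k)-F(X^{k+1})=\tfrac{1}{2}\|\mathcal{A}(X^{k+1}-X^k)\|_2^2+\mu\bigl(\|X^k\|_1-\langle X^k,G^{k+1}\rangle\bigr)+\langle X^k,\Lambda^{k+1}\rangle+\lambda\bigl(\|X^{k+1}\|_F-\langle X^{k+1},Y^k\rangle\bigr),
\]
each summand being nonnegative. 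Your argument is shorter and cleaner for the stated inequality $F(X^k)-F(X^{k+1})\ge 0$; what the paper's heavier computation buys is the explicit lower bound $F(X^k)-F(X^{k+1})\ge \tfrac{1}{2}\|\mathcal{A}(X^{k+1}-X^k)\|_2^2+\lambda(\|X^{k+1}\|_F-\langle X^{k+1},Y^k\rangle)$, which is then essential in part (2) of the convergence theorem to deduce $\|X^{k+1}-X^k\|_F\to 0$. (Your closing remark about recovering a quantitative decrease from strong convexity of the data-fidelity term would not work as stated, since $\tfrac{1}{2}\|\mathcal{A}(\cdot)-\vb\|_2^2$ need not be strongly convex; the quadratic term in the paper's bound comes from the exact second-order expansion of the quadratic part of $G$ combined with the optimality conditions, not from strong convexity.) Your other points of care --- that $\mathbf{0}\in\partial(\lambda\|\cdot\|_F)(\mathbf{0})$ and that $X^{k+1}$ must be an exact minimizer of the subproblem --- are both well taken and consistent with the paper's setup.
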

\begin{proof}
We consider the $k$th iteration of Algorithm \ref{alg1}. Recall that $X^{k+1}$ is the solution
to (\ref{eq: xk_iteration}) in Algorithm \ref{alg1}.
 Set $X^{k+1}:=X_1^{k+1}+i
X_2^{k+1}$ and $Y^k:=Y_1^k+i Y_2^k$ where $X_1^{k+1}, X_2^{k+1}, Y_1^k, Y_2^k \in
\mathbb{R}^{d\times d}$. Take
\begin{equation*}
\begin{aligned}
f_1(X_1, X_2)&:=\mu \left\|\begin{bmatrix} \text{vec}{(X_1)}\\ \text{vec}(X_2)
\end{bmatrix}\right\|_{1,2},\\
f_2(X_1,X_2)&:=\lambda \left(\text{Tr}(X_1)-\left\langle  \begin{bmatrix} {Y_1^{k}} \\ {Y_2^{k}}\end{bmatrix},\begin{bmatrix} X_1 \\ X_2
\end{bmatrix}\right\rangle\right)
+\frac{1}{2}\left\|\widetilde{\mathcal{A}}\left(\begin{bmatrix} X_1 \\ X_2
\end{bmatrix}\right)-\vb\right\|_2^2,
\end{aligned}
\end{equation*}
 and $\Omega:= \mathbb{H}_{+}^{d\times d}$. Then  $f_1$ is convex, $f_2$
is smooth, and  $T_\Omega\left(\begin{bmatrix} X_1^{k+1}\\
X_2^{k+1} \end{bmatrix}\right)$ is convex. According to Proposition \ref{prop: opt1}, we have
\begin{equation}\label{eqn: converge1}
\lambda\left(\begin{bmatrix} \mathbf{I}\\\mathbf{0}\end{bmatrix}- \begin{bmatrix} {Y_1^{k}} \\ {Y_2^{k}} \end{bmatrix}\right)+\mu \begin{bmatrix} G_1^{k+1}\\ G_2^{k+1}\end{bmatrix} +\widetilde{\mathcal{A}}^*\left(\widetilde{\mathcal{A}}\left(\begin{bmatrix} {X_1^{k+1}} \\ {X_2^{k+1}} \end{bmatrix}\right)-\vb\right)=\begin{bmatrix} {\Lambda_1^{k+1}}\\ {\Lambda_2^{{k+1}}} \end{bmatrix},
\end{equation}
and
\begin{equation}\label{eqn:converge2}
\left\langle\begin{bmatrix} {\Lambda_1^{k+1}}\\ {\Lambda_2^{k+1}} \end{bmatrix}, \begin{bmatrix} {X_1^{k+1}} \\ {X_2^{k+1}} \end{bmatrix}\right\rangle=0,
\end{equation}
for some $\Lambda^{k+1}=\Lambda_1^{k+1}+i \Lambda_2^{k+1}$ with  $\begin{bmatrix} {{\Lambda_1^{k+1}}}\\ {\Lambda_2^{k+1}} \end{bmatrix}\in \mathbb{H}_+^{d\times d}$, and $G^{k+1}=G_1^{k+1}+i G_2^{k+1}$
   with $\begin{bmatrix} {G_1^{k+1}}\\ {G_2^{k+1}}\end{bmatrix} \in \partial \left(\left\|\begin{bmatrix} \text{vec}{({X_1^{k+1}})}\\ \text{vec}({X_2^{k+1}}) \end{bmatrix}\right\|_{1,2}\right)$.
According to Proposition \ref{prop: subgradient}, we have
   \[
   \begin{cases}
   [G_1^{k+1}]_{i_1,i_2}^2+[G_2^{k+1}]_{i_1,i_2}^2\leq 1,\ &\text{if} \  [X_1^{k+1}]_{i_1,i_2}=[X_2^{k+1}]_{i_1,i_2}=0;\\
 ([G_1^{k+1}]_{i_1,i_2},[G_2^{k+1}]_{i_1,i_2})=\frac{([X_1^{k+1}]_{i_1,i_2},[X_2^{k+1}]_{i_1,i_2})}{\sqrt{[X_1^{k+1}]_{i_1,i_2}^2+[X_2^{k+1}]_{i_1,i_2}^2}},
 \ & \text{otherwise.}\end{cases}
   \]
Using a similar method for proving Lemma \ref{lem: opt3}, we can obtain
(\ref{eqn:converge2}).
 According to  (\ref{eqn: converge1}),  we have
 \begingroup\fontsize{8pt}{9pt}\selectfont
 \begin{equation}\label{eqn: inner estimation}
 \left\langle \begin{bmatrix} {X_1^k-X_1^{k+1}} \\ {X_2^k-X_2^{k+1}} \end{bmatrix}, \lambda\left(\begin{bmatrix} \mathbf{I}\\\mathbf{0}\end{bmatrix}- \begin{bmatrix} {Y_1^k} \\ {Y_2^k} \end{bmatrix}\right)+\mu \begin{bmatrix} G_1^{k+1}\\ G_2^{k+1}\end{bmatrix} +\widetilde{\mathcal{A}}^*\left(\widetilde{\mathcal{A}}\left(\begin{bmatrix} {X_1^{k+1}} \\ {X_2^{k+1}} \end{bmatrix}\right)-\vb\right)\right\rangle= \left\langle \begin{bmatrix} {X_1^k-X_1^{k+1}} \\ {X_2^k-X_2^{k+1}} \end{bmatrix},\begin{bmatrix} {\Lambda_1^{k+1}}\\ {\Lambda_2^{k+1}} \end{bmatrix}\right\rangle.
 \end{equation}
 \endgroup
 Combining   (\ref{eqn: inner estimation}) and
 \[
 \left\langle \begin{bmatrix} {X_1^{k+1}}\\ {X_2^{k+1}}\end{bmatrix},  \begin{bmatrix} {G_1^{k+1}}\\ {G_2^{k+1}}\end{bmatrix}\right\rangle=\|X^{k+1}\|_1, \ \  \left\langle \begin{bmatrix} {X_1^{k}}\\ {X_2^{k}}\end{bmatrix},  \begin{bmatrix} {Y_1^{k}}\\ {Y_2^{k}}\end{bmatrix}\right\rangle=\|X^{k}\|_F,\ \ \left\langle\begin{bmatrix} {\Lambda_1^{k+1}}\\ {\Lambda_2^{k+1}} \end{bmatrix}, \begin{bmatrix} {X_1^{k+1}} \\ {X_2^{k+1}} \end{bmatrix}\right\rangle=0,
 \]
 we obtain that
 \begin{equation}\label{eqn: inner mid}
 \begin{split}
\langle X^{k}, \Lambda^{k+1}\rangle=& \lambda\text{Tr}(X^k-X^{k+1})-\lambda\|X^k\|_F+\lambda\langle X^{k+1},Y^k\rangle\\
&+\mu\langle X^{k}, G^{k+1}\rangle-\mu\|X^{k+1}\|_1+\langle \mathcal{A}(X^k-X^{k+1}), \mathcal{A}(X^{k+1})-\vb\rangle,
\end{split}
 \end{equation}
since   $\widetilde{\mathcal{A}}\left(\begin{bmatrix} {X_1^{k+1}} \\ {X_2^{k+1}}
\end{bmatrix}\right)=\mathcal{A}(X^{k+1})$ and
$\widetilde{\mathcal{A}}\left(\begin{bmatrix} {X_1^{k}-X_1^{k+1}} \\
{X_2^{k}-X_2^{k+1}} \end{bmatrix}\right)=\mathcal{A}(X^{k}-X^{k+1})$ with
$X^k=X_1^k+iX_2^k$ and $X^{k+1}=X_1^{k+1}+iX_2^{k+1}$. Combining
\[
\begin{split}
F(X^k)-F(X^{k+1})=&\frac{1}{2}\|\mathcal{A}(X^{k+1}-X^k)\|_2^2+\langle\mathcal{A}(X^k-X^{k+1}), \mathcal{A}(X^{k+1})-\vb\rangle\\
&+\mu(\|X^k\|_1-\|X^{k+1}\|_1)+\lambda(\text{Tr}(X^k-X^{k+1})-\|X^k\|_F+\|X^{k+1}\|_F),
\end{split}
\]
and (\ref{eqn: inner mid}), we arrive at
\begin{equation}\label{eqn: f(x_k)-f(x_k+1)}
\begin{split}
F(X^k)-F(X^{k+1})=&\frac{1}{2}\|\mathcal{A}(X^{k+1}-X^k)\|_2^2+\mu(\|X^k\|_1-\langle X^k, G^{k+1}\rangle)+\langle X^k, \Lambda^{k+1}\rangle\\
&+\lambda(\|X^{k+1}\|_F-\langle X^{k+1}, Y^{k}\rangle)\\
\geq& 0.
\end{split}
\end{equation}
Here, the last inequality follows from $\|X^k\|_1-\langle X^k, G^{k+1}\rangle\geq 0,
\|X^{k+1}\|_F-\langle X^{k+1}, Y^{k}\rangle\geq 0$, and $\langle X^k,
\Lambda^{k+1}\rangle\geq 0$  since $\|G^{k+1}\|_\infty\leq 1$,
 $\|Y^{k}\|_F\leq 1$, and $\Lambda^{k+1}\succeq 0 $.
\end{proof}
We next show the convergence property of Algorithm \ref{alg1}.
 \begin{theo}
 Assume that $\{X^k\}_{k\geq 1}$ is a sequence generated by Algorithm \ref{alg1}. We have

 (1) $\{X^k\}_{k\geq 1}$ is a bounded sequence;

 (2)  $\lim_{k\rightarrow \infty}\|X^{k+1}-X^{k}\|_F=0$;

 (3) Assume that  $\widetilde{X}=\widetilde{X}_1+i\widetilde{X}_2$ is an
accumulation point of $\{X^k\}_{k\geq 1}$. Then $\widetilde{X}$ satisfies:
 \begin{enumerate}[(i)]
 \item Stationary condition:
\begin{equation}\label{eq:stapoint}
\lambda\left(\begin{bmatrix} \mathbf{I}\\\mathbf{0}\end{bmatrix}- \begin{bmatrix} \widetilde{X}_1 \\ \widetilde{X}_2 \end{bmatrix}\Big/\left\|\begin{bmatrix} \widetilde{X}_1 \\ \widetilde{X}_2 \end{bmatrix}\right\|_{F}\right)+\mu \begin{bmatrix} \widetilde{G}_1\\ \widetilde{G}_2\end{bmatrix} +\widetilde{\mathcal{A}}^*\left(\widetilde{\mathcal{A}}\left(\begin{bmatrix} \widetilde{X}_1 \\ \widetilde{X}_2 \end{bmatrix}\right)-\vb\right)-\begin{bmatrix} \widetilde{\Lambda}_1\\ \widetilde{\Lambda}_2 \end{bmatrix}=0;
\end{equation}
\item Complementary slackness condition:
\begin{equation}\label{eq:comslack}
\left\langle\begin{bmatrix} \widetilde{\Lambda}_1\\ \widetilde{\Lambda}_2 \end{bmatrix}, \begin{bmatrix} \widetilde{X}_1 \\ \widetilde{X}_2 \end{bmatrix}\right\rangle=0,
\end{equation}
for some  $\begin{bmatrix} {\widetilde{\Lambda}_1}\\ \widetilde{\Lambda}_2 \end{bmatrix}\in \mathbb{H}_+^{d\times d}$ and
   \begin{equation} \label{eqn: G_subgrad}
   \begin{bmatrix} \widetilde{G}_1\\ \widetilde{G}_2\end{bmatrix} \in \partial \left(\left\|\begin{bmatrix} \text{vec}{(\widetilde{X}_1)}\\ \text{vec}(\widetilde{X}_2) \end{bmatrix}\right\|_{1,2}\right),
   \end{equation}
   where
$\partial \left(\left\|\begin{bmatrix} \text{vec}{(\widetilde{X}_1)}\\
\text{vec}(\widetilde{X}_2)
\end{bmatrix}\right\|_{1,2}\right)$ is given in (\ref{eqn: l1_subgrad}).
   \end{enumerate}
 \end{theo}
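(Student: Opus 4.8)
The plan is to let everything rest on the monotone decrease already established in Lemma \ref{eqn: Fk decreasing} together with the exact decrease identity \eqref{eqn: f(x_k)-f(x_k+1)}. Since the algorithm starts at $X^0=\mathbf{0}$, monotonicity gives $F(X^k)\le F(X^0)=\tfrac12\|\vb\|_2^2$ for every $k$. For $X\succeq 0$ each of the three summands of $F$ is nonnegative (recall $\mathrm{Tr}(X)-\|X\|_F\ge 0$), so $\mu\|X^k\|_1\le F(X^k)\le F(X^0)$; using $\|X\|_F\le\|X\|_1$ this yields $\|X^k\|_F\le F(X^0)/\mu$, which is claim (1).

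For (2) I would use that $\{F(X^k)\}$ is nonincreasing and bounded below by $0$, hence convergent, so telescoping \eqref{eqn: f(x_k)-f(x_k+1)} gives $\sum_k\bigl(F(X^k)-F(X^{k+1})\bigr)<\infty$. Because every summand on the right of \eqref{eqn: f(x_k)-f(x_k+1)} is nonnegative, each tends to $0$; in particular $\|\mathcal{A}(X^{k+1}-X^k)\|_2\to 0$ and the Frobenius Bregman term $\lambda(\|X^{k+1}\|_F-\langle X^{k+1},Y^k\rangle)\to 0$. Writing $X^{k+1}=\alpha_k Y^k+Z_k$ with $Z_k\perp X^k$ and $\alpha_k=\langle X^{k+1},Y^k\rangle\ge 0$ (nonnegativity uses $X^{k+1},X^k\succeq 0$), the Bregman term equals $\lambda(\sqrt{\alpha_k^2+\|Z_k\|_F^2}-\alpha_k)$, whose vanishing together with boundedness of $\alpha_k$ from (1) forces the transverse part $\|Z_k\|_F\to 0$. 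Thus $X^{k+1}$ asymptotically becomes a nonnegative multiple $\beta_k X^k$ of $X^k$, and $\mathcal{A}(X^{k+1}-X^k)=(\beta_k-1)\mathcal{A}(X^k)+\mathcal{A}(Z_k)\to 0$ reduces the problem to showing $\beta_k\to 1$.

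This radial step is the main obstacle, since $\mathcal{A}$ is not injective on $\mathbb{H}^{d\times d}$ and so the quadratic term alone does not control $\|X^{k+1}-X^k\|_F$: one must rule out that the magnitude drifts while the iterates slide into the kernel of $\mathcal{A}$. Here I would invoke that $F$ is strictly decreased on the first step (the nondegenerate case $X^1\neq\mathbf{0}$), so $F(X^k)\le F(X^1)<F(\mathbf{0})$ and hence $\liminf_k\|X^k\|_F>0$; combined with the remaining gaps $\mu(\|X^k\|_1-\langle X^k,G^{k+1}\rangle)\to 0$ and $\langle X^k,\Lambda^{k+1}\rangle\to 0$, this pins the scale and forces $\beta_k\to 1$, giving $\|X^{k+1}-X^k\|_F\to 0$. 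This is the only place where a genuine coercivity argument, rather than a routine passage to the limit, is required.

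For (3), boundedness from (1) provides a subsequence $X^{k_j}\to\widetilde{X}$, and (2) yields $X^{k_j+1}\to\widetilde{X}$ as well. I would then pass to the limit in the optimality relations \eqref{eqn: converge1} and \eqref{eqn:converge2} satisfied by each $X^{k_j+1}$: the data-fit map $\widetilde{\mathcal{A}}^*(\widetilde{\mathcal{A}}(\cdot)-\vb)$ is continuous, and $Y^{k_j}=X^{k_j}/\|X^{k_j}\|_F\to\widetilde{X}/\|\widetilde{X}\|_F$ because $\widetilde{X}\neq\mathbf{0}$ (guaranteed by $\liminf_k\|X^k\|_F>0$ from the previous paragraph, which is also what makes \eqref{eq:stapoint} well defined). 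The multipliers $G^{k_j+1}$ are bounded by Proposition \ref{prop: subgradient}, so along a further subsequence $G^{k_j+1}\to\widetilde{G}$; since the subdifferential in \eqref{eqn: l1_subgrad} has closed graph, $\widetilde{G}$ lies in $\partial(\|\cdot\|_{1,2})$ at $\widetilde{X}$, giving \eqref{eqn: G_subgrad}. Consequently $\Lambda^{k_j+1}$ converges to some $\widetilde{\Lambda}$ which, being a limit of elements of the closed cone $\mathbb{H}_+^{d\times d}$, stays in it; passing to the limit in \eqref{eqn:converge2} delivers the complementary slackness \eqref{eq:comslack}, and the limiting form of \eqref{eqn: converge1} is exactly the stationary relation \eqref{eq:stapoint}.
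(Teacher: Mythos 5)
Your parts (1) and (3) are essentially the paper's own argument and are fine: (1) is the level-set bound $\mu\|X^k\|_1\le F(X^k)\le F(X^0)$, and (3) is the standard passage to the limit in the optimality conditions \eqref{eqn: converge1}--\eqref{eqn:converge2} along a subsequence, using boundedness of the subgradients, closedness of the cone $\mathbb{H}_+^{d\times d}$, and closedness of the graph of the subdifferential (the paper does the last point by an explicit case split on $\supp(\widetilde X)$, but your closed-graph appeal is equivalent).

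The gap is in part (2), precisely at the ``radial step'' you yourself flag. Your orthogonal decomposition $X^{k+1}=\alpha_k Y^k+Z_k$ and the deduction $\|Z_k\|_F\to 0$ from the vanishing Bregman term match the paper (which sets $c_k=\langle X^k,X^{k+1}\rangle/\|X^k\|_F^2$, $E^k=X^{k+1}-c_kX^k$). But your proposed resolution of the radial drift --- ``$F(X^k)\le F(X^1)<F(\mathbf 0)$ hence $\liminf_k\|X^k\|_F>0$, combined with the remaining gap terms, pins the scale'' --- does not close the argument. A lower bound on $\|X^k\|_F$ is useless here exactly because $\mathcal A$ is not injective: from \eqref{eqn: converg_mid1} you only know $|c_k-1|\,\|\mathcal A(X^k)\|_2\to 0$, so what you need is a uniform \emph{lower} bound on $\|\mathcal A(X^k)\|_2$, not on $\|X^k\|_F$; and the terms $\mu(\|X^k\|_1-\langle X^k,G^{k+1}\rangle)\to 0$ and $\langle X^k,\Lambda^{k+1}\rangle\to 0$ say nothing about the scale of $X^k$ along the kernel of $\mathcal A$. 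The correct use of the very same level-set inequality is the one in the paper: if $c_k\not\to 1$, then along a subsequence $\|\mathcal A(X^{k_j})\|_2\to 0$, whence
\begin{equation*}
F(X^{k_j})\;\ge\;\tfrac12\|\mathcal A(X^{k_j})-\vb\|_2^2\;\ge\;\tfrac12\bigl(\|\vb\|_2-\|\mathcal A(X^{k_j})\|_2\bigr)^2\;\longrightarrow\;\tfrac12\|\vb\|_2^2=F(\mathbf 0),
\end{equation*}
contradicting $F(X^{k_j})\le F(X^1)<F(\mathbf 0)$. Equivalently, $\|\mathcal A(X^k)\|_2\ge\|\vb\|_2-\sqrt{2F(X^1)}>0$ uniformly, which together with $|c_k-1|\,\|\mathcal A(X^k)\|_2\to 0$ forces $c_k\to 1$ and hence $X^{k+1}-X^k\to\mathbf 0$. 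You should replace your ``pins the scale'' sentence with this argument (and, as the paper does, dispose separately of the degenerate case $X^1=\mathbf 0$, in which all iterates vanish).
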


\begin{proof}
(1) The definition of $F$ implies  $\mu\|X^{k+1}\|_1\leq F(X^{k+1})$ and hence
$\|X^{k+1}\|_1 \leq F(X^{k+1})/\mu \leq F(X^0)/\mu $ for $k\geq 1$. Here we use Lemma
\ref{eqn: Fk decreasing}, i.e., $\{F(X^{k})\}_{k\geq 1}$ is monotonically decreasing.
Hence, $\{X^{k}\}_{k\geq 1}$ is a bounded sequence.

 (2)
We first consider the case where $X^1=\mathbf{0}$. A simple calculation shows that $X^k=\mathbf{0}$ for $k\geq 2$ provided that $X^1=\mathbf{0}$, and we arrive at the conclusion immediately. So, we next just consider the case on  $X^1\neq \mathbf{0}$.
 Taking $k=0$ in (\ref{eqn: f(x_k)-f(x_k+1)}), we obtain that
 \[F(X^0)-F(X^1)=F(\mathbf{0})-F(X^1)=\frac{1}{2}\|\mathcal{A}(X^1)\|_2^2+\lambda\|X^1\|_F\geq \lambda\|X^1\|_F>0
 \]
 as $Y^k=\mathbf{0}$. It implies $F(X^k)\leq F(X^1)<F(\mathbf{0})$ for any $k\geq 2$. Hence, we obtain that
$X^k\neq 0$ for all $k\geq 1$. By  (\ref{eqn: f(x_k)-f(x_k+1)}), we obtain that
\[
F(X^k)-F(X^{k+1})\geq \frac{1}{2}\|\mathcal{A}(X^{k+1}-X^k)\|_2^2+\lambda(\|X^{k+1}\|_F-\langle X^{k+1},Y^k\rangle).
\]
Noting that $\{F(X^k)\}_{k\geq 1}$ is a convergent sequence and
$\|X^{k+1}\|_F-\langle X^{k+1},Y^k\rangle\geq 0$, we have
\begin{equation}\label{eqn: converg_mid1}
\lim_{k\rightarrow\infty }\|\mathcal{A}(X^k-X^{k+1})\|_2\,\,=\,\,0
\end{equation}
and
\begin{equation}\label{eqn: converg_mid2}
 \lim_{k\rightarrow\infty } (\|X^{k+1}\|_F-\langle X^{k+1},Y^{k}\rangle)\,\,=\,\,  \lim_{k\rightarrow\infty } \left(\|X^{k+1}\|_F-\left\langle X^{k+1},\frac{X^k}{\|X^k\|_F}\right\rangle\right)\ \ =\ \ 0.
\end{equation}
{ The following argument is similar with that in Proposition 3.1 (b) in \cite{YLX15}.
We put it here for completeness. Set $c_k:=\frac{\langle X^k,
X^{k+1}\rangle}{\|X^k\|_F^2}$ and $E^k:=X^{k+1}-c_k X^k$. It suffices to prove that
$E^k\rightarrow \mathbf{0}$ and $c_k\rightarrow 1$. According to  (\ref{eqn:
converg_mid2}) and boundness of $\{X^k\}_{k\geq 1}$,  we have
\[
\|E^k\|_F^2=\|X^{k+1}\|_F^2-\frac{\langle X^k, X^{k+1}\rangle^2}{\|X^k\|_F^2}=\left(\|X^{k+1}\|_F-\frac{\langle X^k, X^{k+1}\rangle}{\|X^k\|_F^2}\right)\left(\|X^{k+1}\|_F+\frac{\langle X^k, X^{k+1}\rangle}{\|X^k\|_F^2}\right)\rightarrow 0,
\]
Then we have
\[
0=\lim_{k\rightarrow\infty }\|\mathcal{A}(X^k-X^{k+1})\|_2=\lim_{k\rightarrow\infty }\|\mathcal{A}((c_k-1)X^k-E^k)\|_2=\lim_{k\rightarrow\infty}|c_k-1|\|\mathcal{A}(X^k)\|_2.
\]
If $\lim_{k\rightarrow\infty}c_k\neq 1$, then there exists a subsequence
$\{X^{k_j}\}$ such that $\|\mathcal{A}(X^{k_j})\|_2\rightarrow 0$. Therefore, we can
obtain that
\[
\lim_{k_j\rightarrow\infty} F(X^{k_j})\geq \lim_{k_j\rightarrow \infty}\frac{1}{2}\|\mathcal{A}(X^{k_j})-\mathbf{b}\|_2^2=\frac{1}{2}\|\mathbf{b}\|_2^2=F(X^0),
\]
which leads to a contradiction to the fact that
\[
F(X^{k_j})\leq F(X^1)<F(X^0).
\]
Thus we can get $c_{k}\rightarrow 1$, $E^k\rightarrow \mathbf{0}$, and thus
$X^{k+1}-X^{k}\rightarrow \mathbf{0}$, when $k\rightarrow \infty$. }

(3) Assume that $\{X^{k_j}\}_{j\geq 1}\subset \{X^k\}_{k\geq 1}$ is a subsequence
satisfying $\lim_{j\rightarrow \infty}
X^{k_j}=\widetilde{X}=\widetilde{X}_1+i\widetilde{X}_2\neq \mathbf{0}$. For
simplicity, we abuse the notation and denote $\{X^{k_j}\}$ as $\{X^{j}\}$. Replacing
$k$ by $j-1$ in (\ref{eqn: converge1}) and (\ref{eqn:converge2}), we have
\begin{equation}\label{eqn: converge11} \lambda\left(\begin{bmatrix}
\mathbf{I}\\\mathbf{0}\end{bmatrix}-
\begin{bmatrix} {Y_1^{j-1}} \\ {Y_2^{j-1}} \end{bmatrix}\right)+\mu \begin{bmatrix}
G_1^{j}\\ G_2^{j}\end{bmatrix}
+\widetilde{\mathcal{A}}^*\left(\widetilde{\mathcal{A}}\left(\begin{bmatrix}
{X_1^{j}} \\ {X_2^{j}} \end{bmatrix}\right)-\vb\right)=\begin{bmatrix}
{\Lambda_1^{j}}\\ {\Lambda_2^{{j}}} \end{bmatrix},
\end{equation}
and
\begin{equation}\label{eqn:converge12}
\left\langle\begin{bmatrix} {\Lambda_1^{j}}\\ {\Lambda_2^{j}} \end{bmatrix}, \begin{bmatrix} {X_1^{j}} \\ {X_2^{j}} \end{bmatrix}\right\rangle=0,
\end{equation}
for some $\Lambda^{j}=\Lambda_1^{j}+i \Lambda_2^{j}$ with  $\begin{bmatrix} {{\Lambda_1^{j}}}\\ {\Lambda_2^{j}} \end{bmatrix}\in \mathbb{H}_+^{d\times d}$,  and $G^{j}=G_1^{j}+i G_2^{j}$
   with
     \begin{equation}\label{eqn: G_j}
   \begin{bmatrix} {G_1^{j}}\\ {G_2^{j}}\end{bmatrix} \in \partial \left(\left\|\begin{bmatrix} \text{vec}{({X_1^{j}})}\\ \text{vec}({X_2^{j}}) \end{bmatrix}\right\|_{1,2}\right).
   \end{equation}
Note that   (\ref{eqn: converge11}) is equivalent to
\begin{equation}\label{eqn: converge 3}
\lambda\left(\begin{bmatrix} \mathbf{I}\\\mathbf{0}\end{bmatrix}- \begin{bmatrix} {Y_1^{j-1}} \\ {Y_2^{j-1}} \end{bmatrix}\right) +\widetilde{\mathcal{A}}^*\left(\widetilde{\mathcal{A}}\left(\begin{bmatrix} {X_1^{j}} \\ {X_2^{j}} \end{bmatrix}\right)-\vb\right)=\begin{bmatrix} {\Lambda_1^{j}}\\ {\Lambda_2^{{j}}} \end{bmatrix}-\mu \begin{bmatrix} G_1^{j}\\ G_2^{j}\end{bmatrix}.
\end{equation}
Noting that
\[
\lim_{j\rightarrow \infty}\begin{bmatrix} {Y_1^{j-1}} \\ {Y_2^{j-1}} \end{bmatrix} \,\,=\,\,
  \begin{bmatrix} \widetilde{X}_1 \\ \widetilde{X}_2 \end{bmatrix}\Big/\left\|\begin{bmatrix} \widetilde{X}_1 \\ \widetilde{X}_2 \end{bmatrix}\right\|_{F},
\]
we obtain that  the left hand side of (\ref{eqn: converge 3}) converges to
\begin{equation}\label{eqn: converge_LHS}
\lim_{j\rightarrow \infty} \lambda\left(\begin{bmatrix} \mathbf{I}\\\mathbf{0}\end{bmatrix}- \begin{bmatrix} {Y_1^{j-1}} \\ {Y_2^{j-1}} \end{bmatrix}\right) +\widetilde{\mathcal{A}}^*\left(\widetilde{\mathcal{A}}\left(\begin{bmatrix} {X_1^{j}} \\ {X_2^{j}} \end{bmatrix}\right)-\vb\right)=\lambda\left(\begin{bmatrix} \mathbf{I}\\\mathbf{0}\end{bmatrix}- \begin{bmatrix} \widetilde{X}_1 \\ \widetilde{X}_2 \end{bmatrix}\Big/\left\|\begin{bmatrix} \widetilde{X}_1 \\ \widetilde{X}_2 \end{bmatrix}\right\|_{F}\right)+\widetilde{\mathcal{A}}^*\left(\widetilde{\mathcal{A}}\left(\begin{bmatrix} \widetilde{X}_1 \\ \widetilde{X}_2 \end{bmatrix}\right)-\vb\right).
\end{equation}
For convenience, we set
\[
 P^j:=\begin{bmatrix} {\Lambda_1^{j}}\\ {\Lambda_2^{{j}}} \end{bmatrix}\quad\text{and}\quad Q^j:=-\mu \begin{bmatrix} G_1^{j}\\ G_2^{j}\end{bmatrix}.
\]
According to  (\ref{eqn: G_j}) and Proposition  \ref{prop: subgradient}, we have
$\|G_1^j\|_\infty\leq 1$ and $\|G_2^j\|_\infty\leq 1$. Combining  (\ref{eqn: converge
3}) and the boundedness of $\{X^j\}_{j\geq 1}$, we obtain that $\{P^j\}_{j\geq 1}$
and $\{Q^j\}_{j\geq 1}$ are also bounded sequences, which can belong to some compact
sets $ S\subset \mathbb{H}_{+}^{d\times d}$ and $T$, respectively.

We assume that  $\{{j_l}\}_{l\geq 1}$  is a subsequence of  $\{{j}\}_{j\geq 1}$ such
that $\lim_{l\rightarrow \infty}P_{j_l}=\widetilde{P}$ and $\lim_{l\rightarrow
\infty}Q_{j_l}=\widetilde{Q}$ for some $\widetilde{P}\in S$, $\widetilde{Q}\in T$.
More concretely, we have
\[
\lim_{l\rightarrow \infty}P^{j_l}=\lim_{l\rightarrow \infty}\begin{bmatrix} {\Lambda_1^{j_l}}\\ {\Lambda_2^{{j_l}}} \end{bmatrix}= \begin{bmatrix} {\widetilde{\Lambda}_1}\\ {\widetilde{\Lambda}_2} \end{bmatrix},\ \text{and}\ \lim_{l\rightarrow \infty}Q^{j_l}=\lim_{l\rightarrow \infty}-\mu\begin{bmatrix} G_1^{j_l}\\ G_2^{j_l}\end{bmatrix}=-\mu\begin{bmatrix} {\widetilde{G}_1}\\ {\widetilde{G}_2} \end{bmatrix}\]
for some
\[
\quad \begin{bmatrix} {\widetilde{\Lambda}_1}\\ {\widetilde{\Lambda}_2} \end{bmatrix}\in S\subset \mathbb{H}_{+}^{d\times d}.
\]
According to (\ref{eqn: converge 3}),  we have
\[
 \lambda\left(\begin{bmatrix} \mathbf{I}\\\mathbf{0}\end{bmatrix}-
\begin{bmatrix} \widetilde{X}_1 \\ \widetilde{X}_2
\end{bmatrix}\Big/\left\|\begin{bmatrix} \widetilde{X}_1 \\ \widetilde{X}_2
\end{bmatrix}\right\|_{F}\right)+\widetilde{\mathcal{A}}^*\left(\widetilde{\mathcal{A}}\left(\begin{bmatrix}
\widetilde{X}_1 \\ \widetilde{X}_2 \end{bmatrix}\right)-\vb\right)= \begin{bmatrix}
{\widetilde{\Lambda}_1}\\ {\widetilde{\Lambda}_2} \end{bmatrix}-\mu \begin{bmatrix}
{\widetilde{G}_1}\\ {\widetilde{G}_2} \end{bmatrix},
\]
which implies the stationary condition (\ref{eq:stapoint}). The complementary
slackness condition (\ref{eq:comslack}) is obtained by
\[
\left\langle\begin{bmatrix} {\widetilde{\Lambda}_1}\\
{\widetilde{\Lambda}_2} \end{bmatrix}, \begin{bmatrix} {\widetilde{X}_1} \\
{\widetilde{X}_2} \end{bmatrix}\right\rangle=\lim_{l\rightarrow \infty}\left\langle\begin{bmatrix} {\Lambda_1^{j_l}}\\ {\Lambda_2^{j_l}} \end{bmatrix},
\begin{bmatrix} {X_1^{j_l}} \\ {X_2^{j_l}} \end{bmatrix}\right\rangle\,\,=\,\,0.
\]
Here, we use (\ref{eqn:converge12}).

We remain to prove  (\ref{eqn: G_subgrad}). For sufficiently large $j_l$, we have
$\text{supp}(\widetilde{X})\subset \text{supp}(X^{j_l})$.  If $(i_1,i_2)\in
\text{supp}(\widetilde{X})$, then
\[
\lim_{l\rightarrow \infty}([G_1^{j_l}]_{i_1,i_2}, [{G_2^{j_l}}]_{i_1,i_2})= \frac{([\widetilde{X}_1]_{i_1,i_2}, [\widetilde{X}_2]_{i_1,i_2})}{\sqrt{[\widetilde{X}_1]_{i_1,i_2}^2+[\widetilde{X}_2]_{i_1,i_2}^2}}.
\]
  If $(i_1,i_2)\notin \text{supp}(\widetilde{X})$,  we have
\[
([G_1^{j_l}]_{i_1,i_2})^2+([{G_2^{j_l}}]_{i_1,i_2})^2\leq 1
\]
and hence
\[
([\widetilde{G}_1]_{i_1,i_2})^2+([{\widetilde{G}_2}]_{i_1,i_2})^2\leq 1.
\]
 Thus \[
\lim_{l\rightarrow \infty}\begin{bmatrix} G_1^{j_l}\\
G_2^{j_l}\end{bmatrix}=\begin{bmatrix} {\widetilde{G}_1}\\ {\widetilde{G}_2}
\end{bmatrix}\in \partial \left(\left\|\begin{bmatrix}
\text{vec}{(\widetilde{X}_1)}\\ \text{vec}(\widetilde{X}_2)
\end{bmatrix}\right\|_{1,2}\right),
\]
 which leads to (\ref{eqn: G_subgrad}).
\end{proof}
\section{Numerical experiments}\label{sec: numerical}

The purpose of numerical experiments is to compare the performance of  (\ref{eq:
unconstrained SDP}) with that of SPARTA   \cite{WZG16}, of SWF \cite{YWW17} and of
SPRSF \cite{PBA18}. We choose the parameters of  those algorithms as in
\cite{WZG16,YWW17,PBA18}. In this section,  we use the relative error
\[
\text{relative error}:=\frac{d_r(\vz,\vx_0)}{\|\vx_0\|_2},
\]
where $d_r(\vz,\vx_0)=\min \|\vz \pm \vx_0\|_2$ for the real case and
$d_r(\vz,\vx)=\min_{\theta\in [0,2\pi)}\|\exp(-i\theta)\vz -\vx\|_2$ for the complex
case. In our numerical experiments, we assume that the sampling vectors $\va_j,
j=1,\ldots,m$ are Gaussian random vector, i.e., $\va_j\sim
\mathcal{N}(0,\mathbf{I}_{d})$ for real case and  $\va_j\sim
\mathcal{N}(0,\frac{1}{2}\mathbf{I}_{d})+i\mathcal{N}(0,\frac{1}{2}\mathbf{I}_{d})$
for complex case.

 For each fixed $k$, the support of a $k$-sparse signal $\vx_0$
is drawn from the uniform distribution over the set of all subsets of $[1,m]\cap \Z$
of size $k$.  The non-zero entries of the real (resp. complex) $k$-sparse signal
$\vx_0$ have Gaussian distribution $\mathcal{N}(0,1)$ (resp.
$\mathcal{N}(0,1)+i\mathcal{N}(0,1)$). In order to reduce dimension effect, we
normalize $\vx_0$ into $\|\vx_0\|_2=1$. {{All experiments are carried out on Matlab
2017 with a 3.7 GHz Intel Core i7-8700K and 64 GB memory.}}

\begin{exam}
The aim of this numerical experiment is to test the success rate of Algorithm \ref{alg1}
against the measurement number $m$. In this example, we take $k=5$ and $d=50$. The
ratio between $m$ and $d$ is varied from $0.1$ to $4$, with stepsize $0.1$. We choose
$\mu=0.001$ and $\lambda=\frac{\mu k}{\sqrt{2}-1}$ in  Algorithm \ref{alg1}.
 We classify a recovery as a success if the relative error is less than $10^{-3}$. For
 each fixed $m$, we repeat the experiments for $40$ trails and and calculate the success
rate.


Figure \ref{fig: phase_transitionk=5} depicts the empirical probability of successful
recovery against the measurement number $m$. The numerical results show that
Algorithm \ref{alg1} outperform other algorithms.
\end{exam}
\begin{figure}[htbp]
	\centering
	\subfloat[]{{\includegraphics[width=0.5\textwidth]{./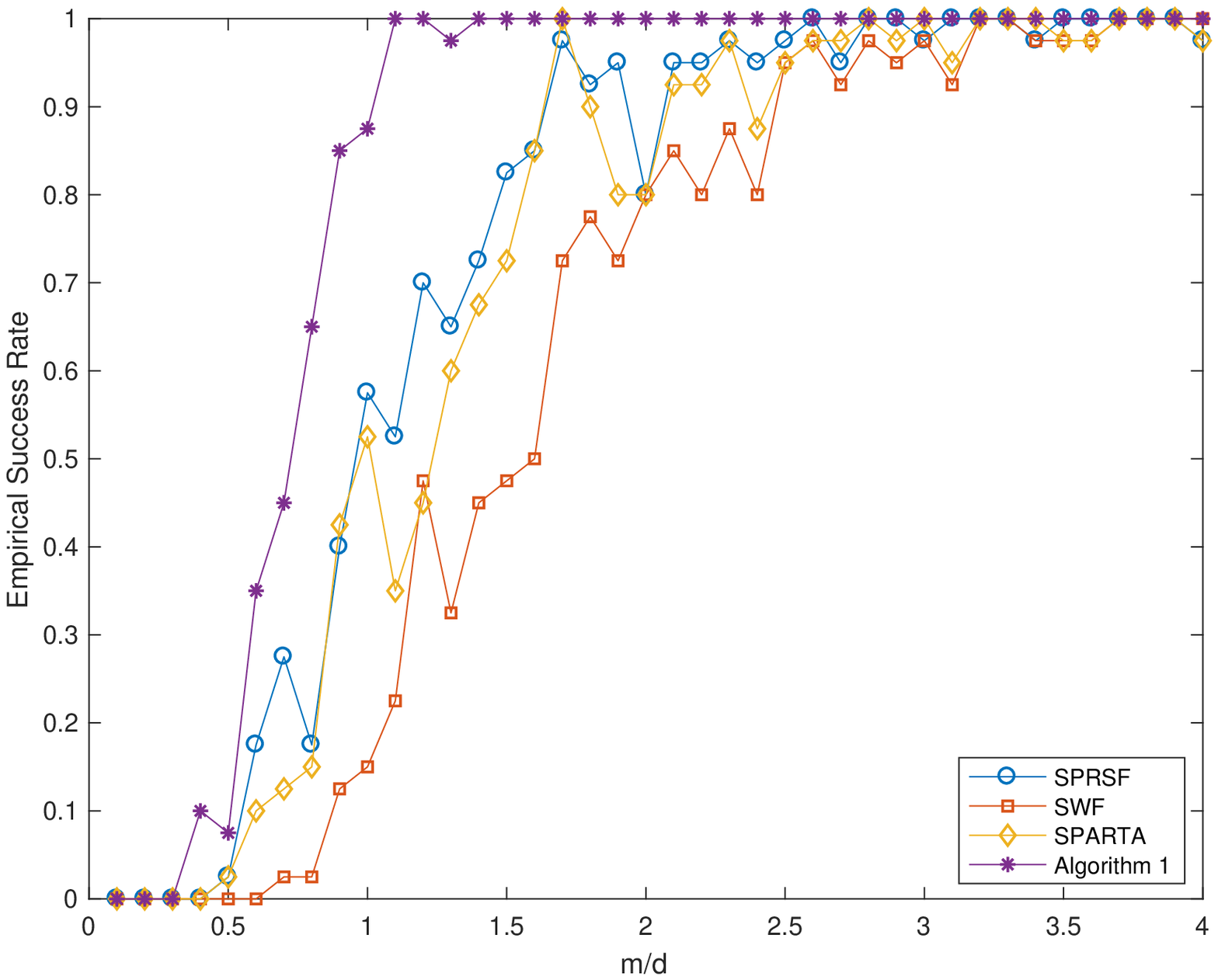}}}
	\subfloat[]{{\includegraphics[width=0.5\textwidth]{./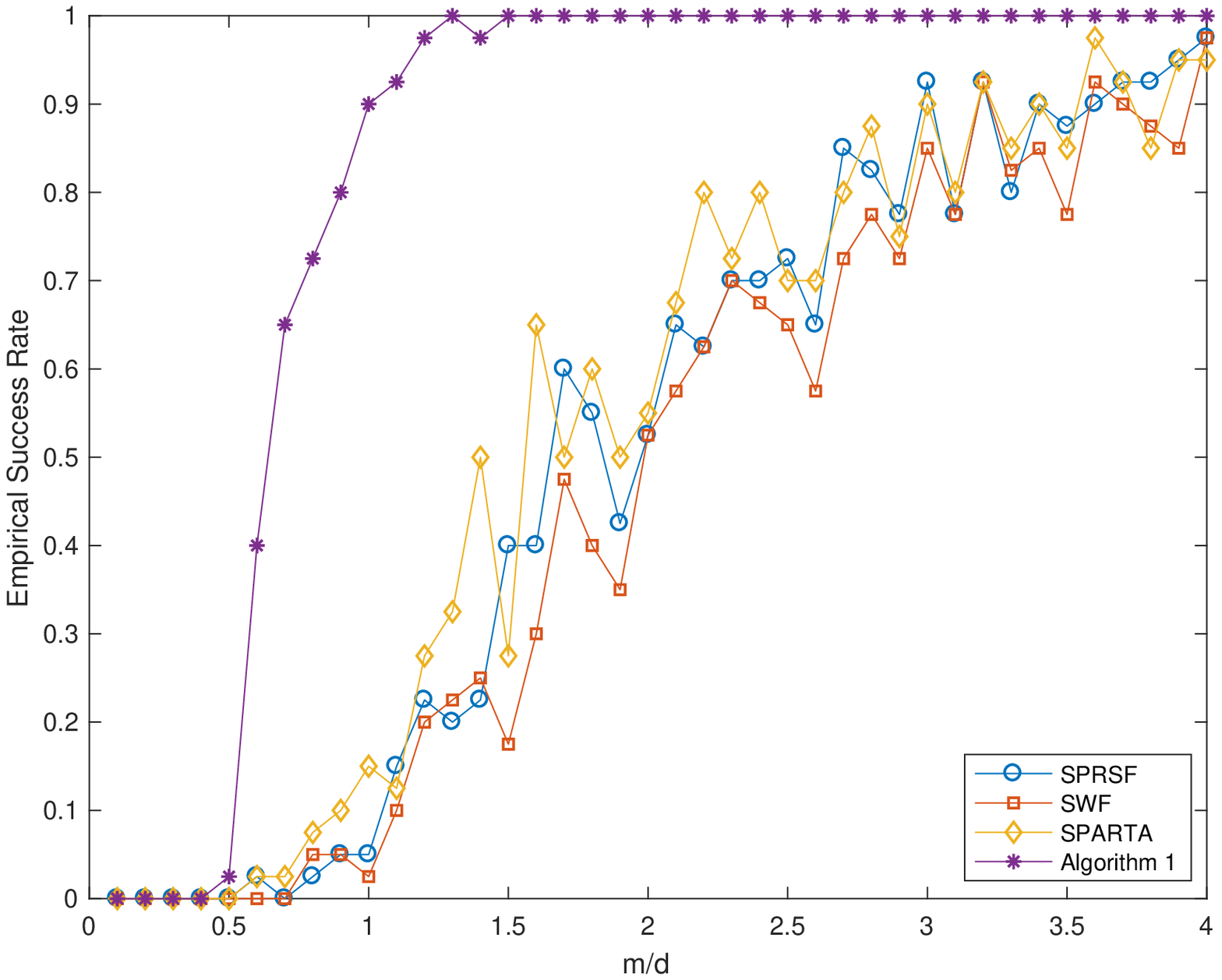}}}
	\caption{Comparison of different algorithms for fixed k = 5 with different $m/n$ ratio: (A) Noiseless real-valued Gaussian model; (B) Noiseless complex-valued Gaussian model.}
	\label{fig: phase_transitionk=5}
\end{figure}

\begin{exam}
In this example,  we test the success  rate of Algorithm \ref{alg1} against the sparsity level
$k$. We take $d=50$ and $m=2d$. The parameters in Algorithm \ref{alg1} are taken as $\mu=0.001$ and
$\lambda=\frac{\mu k}{\sqrt{2}-1}$.
 Figure \ref{fig: phase_transitionk}  depicts the numerical results.
   It shows that Algorithm \ref{alg1} is superior to the SPRSF, SWF and SPARTA  for
both real and complex cases. Furthermore, we can see that Algorithm \ref{alg1} can make good
performance even under large level of sparsity.
\end{exam}
\begin{figure}[htbp]
	\centering
	\subfloat[]{\label{phase_transition_gaussian}{\includegraphics[width=0.5\textwidth]{./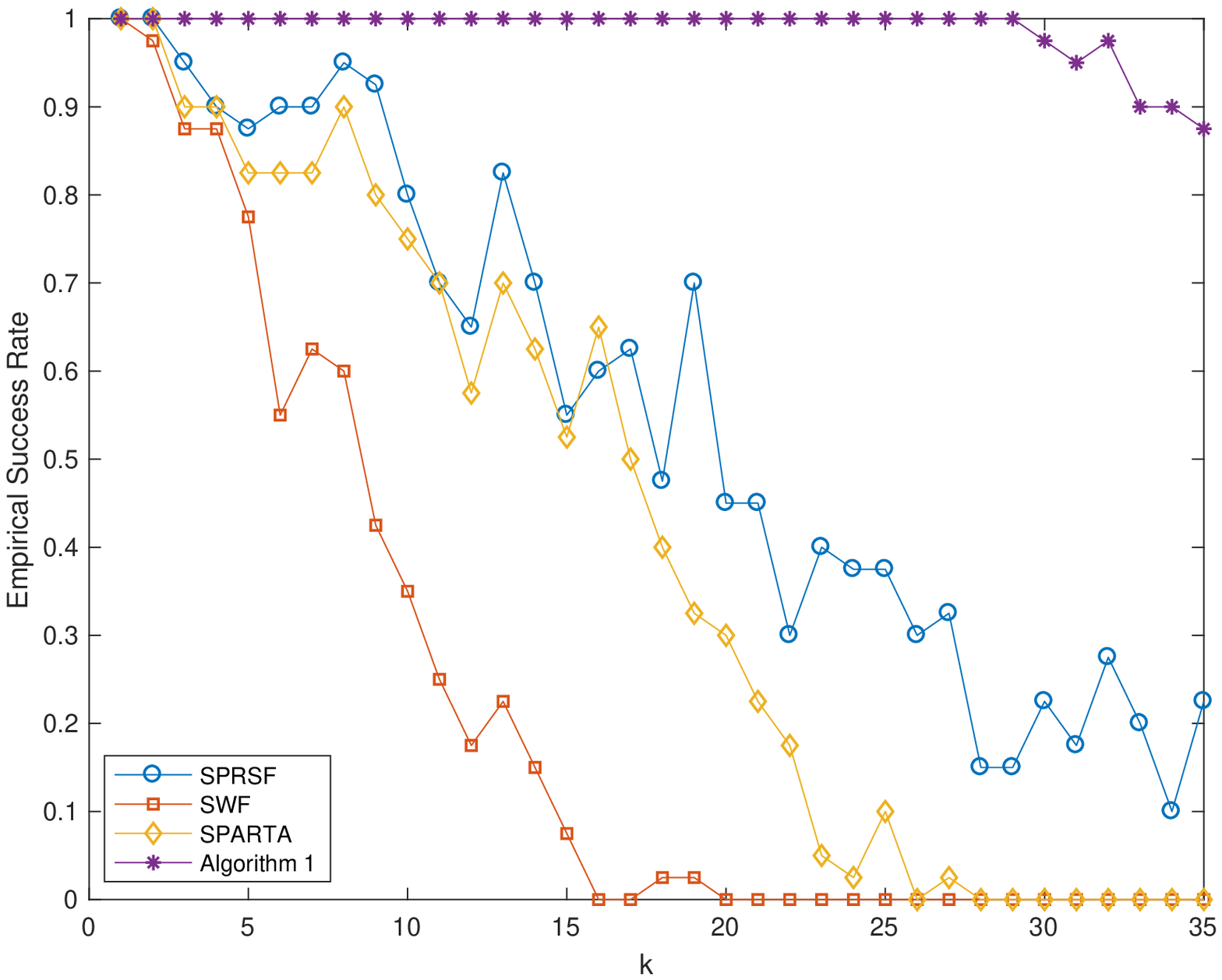}}}
	\subfloat[]{\label{phase_transition_uniform}{\includegraphics[width=0.5\textwidth]{./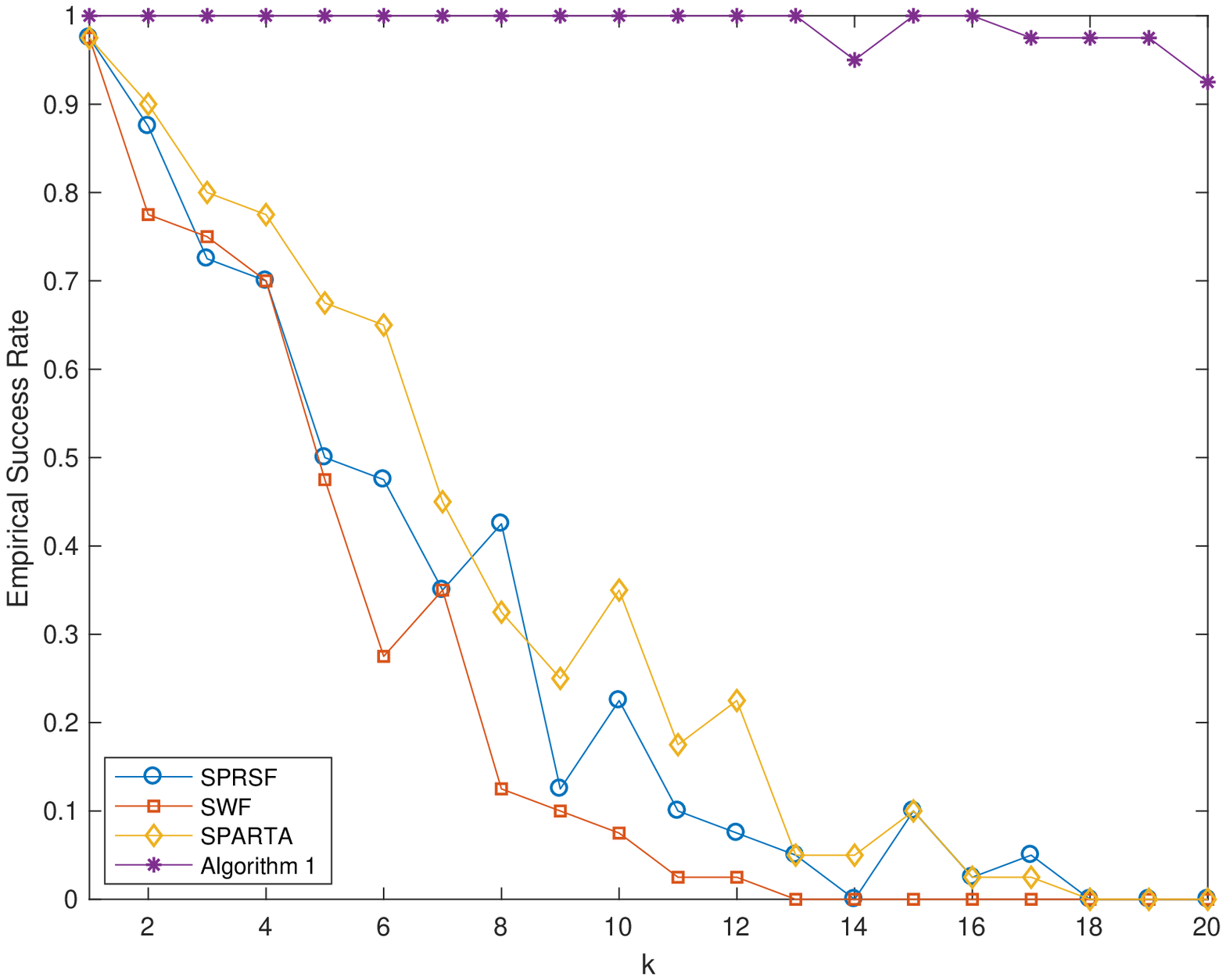}}}
	\caption{Comparison of different algorithms for different sparsity level $k$:
(A) Noiseless real-valued Gaussian model;  (B) Noiseless complex-valued Gaussian model.}
	\label{fig: phase_transitionk}
\end{figure}

\begin{exam} In this example, we test the robustness of
Algorithm \ref{alg1}. We take $d=50$, $m=2d$ and $k=5$ for both real and complex cases,
followed by adding white Gaussian noise by MATLAB function
\textbf{awgn($\mathcal{A}(\vx_0)$,snr)}, i.e., $b_j=\abs{\innerp{\va_j,\vx_0}}^2+w_j, j=1,\ldots,m$ with $\vw\sim \sqrt{\frac{\|\mathcal{A}(\vx_0)\|_2^2/m}{10^{snr/10}}}\mathcal{N}(0, \mathbf{I}_{m})$.
 Since other algorithms do not make $100\%$ recovery
under this setting, we only show the robustness performance on Algorithm \ref{alg1}.
The SNR value varies from 10dB to 50dB, with step-size 5dB. The SNR in each noise
level is averaged over 20 independent trials. According to Theorem \ref{th:rank1mod},
we choose $\mu=\max\{0.5\|\vw\|_2,0.001\}$ and $\lambda=\frac{\mu k}{\sqrt{2}-1}$. We
compute the signal-to noise ratio of reconstruction in dB as
$-20\log_{10}(\text{relative error})$.  In Figure \ref{fig: robustness}, it shows
that Algorithm \ref{alg1}  yields robust recovery with respect to different noise
level. In addition, the recovery error is a bitter larger for complex case.
\end{exam}
\begin{figure}[htbp]
	\centering
{\includegraphics[width=0.5\textwidth]{./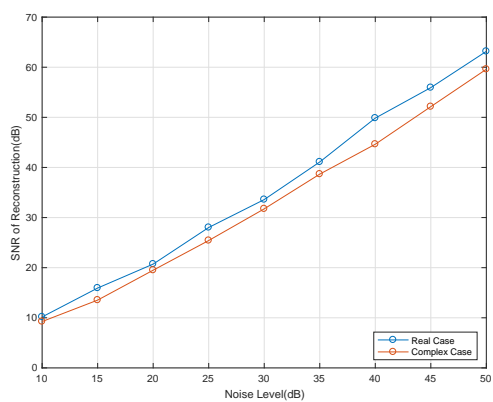}}
	\caption{SNR of signal recovery v.s. noise level in measurements when $k=5$: $x$-axis is the noise level varying from $10$db to $50$db, $y$-axis is the reconstruction error in db as $-20\log_{10}(\text{relative error})$.}
	\label{fig: robustness}
\end{figure}

\end{document}